\author{Luca Martinazzi\thanks{This work was supported by the Swiss National Fond Grant no. PBEZP2-129520.}}
\title{Quantization for the prescribed $Q$-curvature equation on open domains}
\date{March 21, 2010}
\newtheorem{trm}{Theorem}
\newtheorem{prop}[trm]{Proposition}
\newcommand{\R}[1]{\mathbb{R}^{#1}}
\newcommand{\de}{\partial}
\newenvironment{proof}{\noindent\emph{Proof.}}{\hfill$\square$\medskip}
\newenvironment{rmk}{\medskip\noindent\emph{Remark.}}{\medskip}
\DeclareMathOperator{\loc}{loc}
\DeclareMathOperator*{\dist}{dist}
\DeclareMathOperator*{\Intm}{\int\!\!\!\!\!\! \rule[2.6pt]{6.5pt}{.4pt}}
\DeclareMathOperator{\intm}{\int\!\!\!\!\!\!\: --}
\DeclareMathOperator{\vol}{vol}
\begin{document}
\maketitle

\begin{abstract}
We discuss compactness, blow-up and quantization phenomena for the prescribed $Q$-curvature equation $(-\Delta)^m u_k=V_ke^{2mu_k}$ on open domains of $\R{2m}$. Under natural integral assumptions we show that when blow-up occurs, up to a subsequence
$$\lim_{k\to \infty}\int_{\Omega_0} V_ke^{2mu_k}dx=L\Lambda_1,$$
where $\Omega_0\subset\subset\Omega$ is open and contains the blow-up points, $L\in\mathbb{N}$ and $\Lambda_1:=(2m-1)!\vol(S^{2m})$ is the total $Q$-curvature of the round sphere $S^{2m}$. Moreover, under suitable assumptions, the blow-up points are isolated. We do not assume that $V$ is positive.
\end{abstract}

\section{Introduction}

Let $\Omega\subset\R{2m}$ be a connected open set and consider a sequence $(u_k)$ of solutions to the equation

\begin{equation}\label{eq0}
(-\Delta)^m u_k =V_k e^{2mu_k}\quad \text{in }\Omega,
\end{equation}
where
\begin{equation}\label{Vk}
V_k\to V_0 \quad \text{in } C^0_{\loc}(\Omega),
\end{equation}
and, for some $\Lambda>0$,
\begin{equation}\label{area}
\int_{\Omega} e^{2mu_k}dx\leq \Lambda.
\end{equation}

Equation \eqref{eq0} arises in conformal geometry, as it is the higher-dimensional generalization of the Gauss equation for the prescribed Gaussian curvature. In fact, if $u_k$ satisfies \eqref{eq0}, then the conformal metric
$$g_k:=e^{2u_k}|dx|^2$$
has $Q$-curvature $V_k$ (here $|dx|^2$ denotes the Euclidean metric). For the definition of $Q$-curvature and for more details about the geometric meaning of \eqref{eq0} we refer to the introduction in \cite{mar1}.

An important example of solutions to \eqref{eq0}-\eqref{area} can be constructed as follows. It is well known that the $Q$-curvature of the round sphere $S^{2m}$ is $(2m-1)!$. Then, if $\pi:S^{2m}\to \R{2m}$ is the stereographic projection, the metric $g_1:=(\pi^{-1})^*g_{S^{2m}}$ also has $Q$-curvature $(2m-1)!$. Since $g_1=e^{2\eta_0}|dx|^2$, with $\eta_0(x)=\log\frac{2}{1+|x|^2}$, it follows that
\begin{equation}\label{eta0}
\begin{split}
&(-\Delta)^m \eta_0=(2m-1)!e^{2m\eta_0},\\
&(2m-1)!\int_{\R{2m}} e^{2m\eta_0}dx=(2m-1)!\vol(S^{2m})=:\Lambda_1.
\end{split}
\end{equation}

The purpose of this paper is to study the compactness properties of \eqref{eq0}, and show analogies and differences with previous results in this direction. We start by considering the following model case. The sequence of functions $u_k(x):=\log\frac{2k}{1+k^2 |x|^2}$ satifies \eqref{eq0} on $\Omega=\R{2m}$ with $V_k\equiv (2m-1)!$ and $\int_{\R{2m}}e^{2mu_k}dx=\vol(S^{2m})$ for every $k$. On the other hand $(u_k)$ is not precompact, as $u_k(0)\to\infty$ and $u_k\to-\infty$ locally uniformly on $\R{2m}\setminus\{0\}$ so that
$$V_k e^{2mu_k}dx\rightharpoonup \Lambda_1\delta_0$$
in the sense of measures as $k\to\infty$.

\medskip

For $m=1$, Brezis and Merle in their seminal work \cite{BM} proved that a sequence $(u_k)$ of solutions to \eqref{eq0}-\eqref{area} is either bounded in $C^{1,\alpha}_{\loc}(\Omega)$, or $u_k\to-\infty$ uniformly locally in $\Omega\setminus S$, where $S=\{x^{(1)}, \ldots, x^{(I)}\}$ is a finite set. In particular one has
$$V_k dx\rightharpoonup \sum_{i=1}^I\alpha_i \delta_{x^{(i)}}$$
in the sense of measures. Brezis and Merle also conjectured that, at least for $V_0> 0$, in the latter case one has $\alpha_i=4\pi L_i$ for some positive integers $L_i$. This was shown to be true by Li and Shafrir \cite{LS}. Notice that $4\pi=\Lambda_1$ for $m=1$.

\medskip

For $m\geq 2$ things are more complex. In \cite{CC} Chang and Chen proved that for every $\alpha\in (0,\Lambda_1)$ there exists a solution $v$ to $(-\Delta)^m v=(2m-1)!e^{2mv}$ on $\R{2m}$ and with $(2m-1)!\int_{\R{2m}}e^{2mv}dx=\alpha$. Then, setting
$$u_k(x)=v(kx)+\log k,$$
we find a non-compact sequence of solutions to \eqref{eq0}, \eqref{Vk}, \eqref{area} with $V_k\equiv (2m-1)!$ and
$$\int_{\R{2m}}V_ke^{2mu_k}dx\to \alpha\not\in \Lambda_1\mathbb{N}.$$
Moreover for $m=2$ Adimurthi, Robert and Struwe \cite{ARS} gave examples of sequences $(u_k)$ with $u_k\to\infty$ on a hyperplane. These facts suggest that in order to obtain a situation similar to the results of Brezis-Merle (finiteness of the blow-up set) and of Li-Shafrir (quantization of the total $Q$-curvature), we should make further assumption. In this setting this was first done by Robert for $m=2$, and Theorem \ref{trm1} below is a generalization of Robert's result to the case when $m$ is arbitrary.

\begin{trm}\label{trm1}  Let $(u_k)\subset C^{2m}_{\loc}(\Omega)$ be solutions to \eqref{eq0}, \eqref{Vk} and \eqref{area}, and assume that there is a ball $B_{\rho}(\xi)\subset\Omega$ such that
\begin{equation}\label{Deltau}
\|\Delta u_k\|_{L^1(B_\rho(\xi))}\leq C.
\end{equation}
Then there is a finite (possibly empty) set $S=\{x^{(1)},\ldots , x^{(I)}\}$ such that one of the following is true:
\begin{itemize}
\item[(i)] up to a subsequence $u_k\to u_0$ in $C^{2m-1}_{\loc}(\Omega\setminus S)$ for some $u_0\in C^{2m}(\Omega\setminus S)$ solving $(-\Delta)^m u_0=V_0e^{2mu_0}$, or
\item[(ii)] up to a subsequence $u_k\to -\infty$ locally uniformly in $\Omega \setminus S$.
\end{itemize}
If $S\neq \emptyset$ and $V(x^{(i)})>0$ for some $1\le i \le I$, then case $(ii)$ occurs.

\medskip

Moreover, if we also assume that
\begin{equation}\label{Deltau-}
\|(\Delta u_k)^-\|_{L^1(\Omega)}\leq C,\quad \text{with } (\Delta u_k)^-:=\min\{\Delta u_k, 0\},
\end{equation}
we have in case $(i)$ that $S=\emptyset$ and in case $(ii)$ that $V_0(x^{(i)})>0$ for $1\leq i\leq I$ and
\begin{equation}\label{quant}
V_k e^{2m u_k}dx\rightharpoonup \sum_{i=1}^I \alpha_i \delta_{x^{(i)}}
\end{equation}
in the sense of measures in $\Omega$, where $\alpha_i=L_i\Lambda_1$ for some positive $L_i\in \mathbb{N}$. In particular, in case $(ii)$ for any open set $\Omega_0\subset\subset \Omega$ with $S\subset \Omega_0$ we have
\begin{equation}\label{quant2}
\int_{\Omega_0}V_k e^{2mu_k}dx\to L\Lambda_1
\end{equation}
for some $L\in\mathbb{N}$ ($L=0$ if $S=\emptyset$).
\end{trm}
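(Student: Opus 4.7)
The plan is to introduce the blow-up set
$$ S := \Bigl\{ x \in \Omega : \text{for all } r > 0,\ \limsup_{k \to \infty} \int_{B_r(x)} |V_k| e^{2mu_k}\, dy \geq \varepsilon_0 \Bigr\}, $$
with $\varepsilon_0 > 0$ chosen so small that a Brezis--Merle type inequality applies on balls carrying total mass below $\varepsilon_0$. The finiteness of $S$ then follows from \eqref{area} together with local uniform bounds on $V_k$. For $x_0 \in \Omega \setminus S$ I would work in a small ball where $\int |V_k| e^{2mu_k} < \varepsilon_0$ and decompose $u_k = w_k + h_k$ via the Green representation for $(-\Delta)^m$, with $h_k$ polyharmonic; iterated elliptic estimates together with the assumption \eqref{Deltau} control $h_k$ and its derivatives. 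A Brezis--Merle inequality applied to $w_k$ then gives $e^{2mu_k}\in L^p_{\loc}$ for some $p>1$, which elliptic regularity upgrades to uniform $C^{2m-1}_{\loc}$ bounds on $u_k - \bar u_k^{B_\rho(\xi)}$.

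From this oscillation bound the dichotomy is routine: along a subsequence either the means stay bounded, in which case a standard compactness argument produces a $C^{2m-1}_{\loc}(\Omega\setminus S)$ limit $u_0$ solving $(-\Delta)^m u_0 = V_0 e^{2mu_0}$ (case (i)); or they tend to $-\infty$, and then the oscillation bound combined with the connectedness of $\Omega\setminus S$ forces $u_k\to-\infty$ locally uniformly (case (ii)). For the implication ``$V(x^{(i)})>0\Rightarrow$ case (ii)'', I would argue by contradiction: in case (i), $V_0 e^{2mu_0}$ has a continuous density on $\Omega\setminus S$, so the atom of mass $\geq\varepsilon_0$ that $x^{(i)}$ carries in the weak limit of $V_k e^{2mu_k}$ would need to be non-positive; with $V(x^{(i)})>0$ this is impossible.

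Now assume the global bound \eqref{Deltau-}. In case (i), to see $S=\emptyset$ I would observe that \eqref{Deltau-} combined with the $C^{2m-1}_{\loc}$ convergence on $\Omega\setminus S$ yields a distributional bound on $(-\Delta)^m u_0$ across each $x^{(i)}$ which rules out atomic singularities, so $u_0$ extends smoothly and no concentration can occur. For case (ii), I would perform a blow-up analysis at each $x^{(i)}$: take local maximizers $x_k\to x^{(i)}$, set $\mu_k := e^{-u_k(x_k)}\to 0$, and rescale $\tilde u_k(y) := u_k(x_k+\mu_k y) + \log\mu_k$. The rescaled sequence solves $(-\Delta)^m\tilde u_k = \tilde V_k e^{2m\tilde u_k}$ with $\tilde V_k\to V_0(x^{(i)})>0$ locally uniformly, inherits \eqref{area}, and inherits \eqref{Deltau-} on all of $\R{2m}$. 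A subsequential limit $\eta$ is an entire solution with $\int e^{2m\eta}\,dy<\infty$ and $\|(\Delta\eta)^-\|_{L^1(\R{2m})}<\infty$; the latter forces $\eta$ to be a \emph{normal} solution in the sense of Chang--Chen / Lin / Martinazzi, whose classification yields $V_0(x^{(i)})\int e^{2m\eta}\,dy = \Lambda_1$.

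The hardest step will be the bubble iteration required to get \eqref{quant}: a single rescaling need not capture all of the mass concentrated at $x^{(i)}$, since bubbles of distinct scales can pile up at the same point. I would set up a Druet--Hebey--Robert type selection, extracting bubbles one at a time and establishing a ``no-neck'' property, namely that $\int V_k e^{2mu_k}$ in the annuli between consecutive bubble scales vanishes in the limit. The global assumption \eqref{Deltau-} is decisive here: combined with a Pohozaev identity on each neck annulus and the $\varepsilon_0$-regularity of the first step, it controls the residual terms and forces the neck mass to zero. Since each extracted bubble contributes exactly $\Lambda_1$ and the total mass is bounded a priori, only finitely many bubbles $L_i$ form at $x^{(i)}$, giving $\alpha_i = L_i\Lambda_1$ with $L_i\in\mathbb{N}$; summation and localization to $\Omega_0\supset S$ then yields \eqref{quant} and \eqref{quant2}.
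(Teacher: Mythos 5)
Your outline contains two genuine gaps, both tied to the fact that for $m\ge 2$ the kernel of $(-\Delta)^m$ is much larger than in the Brezis--Merle case. First, in the $\varepsilon$-regularity step you decompose $u_k=w_k+h_k$ with $h_k$ polyharmonic and assert that ``iterated elliptic estimates together with \eqref{Deltau} control $h_k$ and its derivatives.'' But \eqref{Deltau} is an $L^1$ bound on $\Delta u_k$ on \emph{one fixed ball} $B_\rho(\xi)$; a polyharmonic function of order $m\ge 2$ that is bounded above and has controlled Laplacian on one ball is not thereby controlled on the rest of $\Omega$ --- the Adimurthi--Robert--Struwe examples \cite{ARS}, where blow-up occurs on a hyperplane, live exactly in this loophole. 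The paper closes it by passing to the normalized limit $u_k/\beta_k\to\varphi$ from \cite{mar3}, observing that $\Delta\varphi$ is analytic, hence $\Delta\varphi\equiv 0$ on all of $\Omega\setminus S$ once it vanishes on $B_\rho(\xi)$, and then invoking the strong maximum principle to conclude $\varphi<0$, i.e.\ $\Gamma=\emptyset$. Some unique-continuation step of this kind is indispensable and is absent from your sketch.

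Second, and more decisively, your identification of the bubble rests on the claim that the rescaled functions $\tilde u_k(y)=u_k(x_k+\mu_k y)+\log\mu_k$ ``inherit \eqref{Deltau-} on all of $\R{2m}$.'' They do not: one computes $\|(\Delta\tilde u_k)^-\|_{L^1(\R{2m})}=\mu_k^{2-2m}\|(\Delta u_k)^-\|_{L^1(\Omega)}\to\infty$ for $m\ge 2$, so no normality of the limit can be extracted this way. What does survive the rescaling is the scale-invariant estimate $\int_{B_r(x_0)}|\nabla^{\ell-2}\Delta u_k|\,dx\le Cr^{2m-\ell}$, which the paper derives from a Green representation of $\Delta u_k$ (Propositions \ref{prop4} and \ref{grad}), using \eqref{Deltau-} precisely to control the boundary terms; it is this bound that forces the entire limit to be the spherical solution $\eta_0$. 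Relatedly, your neck analysis via a Pohozaev identity on annuli cannot be run under the hypotheses of Theorem \ref{trm1}: a Pohozaev identity for $(-\Delta)^m$ carries boundary terms involving $\nabla u_k$, and no uniform control on first derivatives is available here --- only on $\nabla^{\ell-2}\Delta u_k$ for $\ell\ge 2$. This is exactly the dividing line between Theorem \ref{trm1} and Theorem \ref{trm2} (where \eqref{nablau3} makes the Druet--Robert Pohozaev argument of \cite{DR} available and yields the stronger conclusion $\alpha_i=\Lambda_1$). For Theorem \ref{trm1} the paper instead kills the neck energy with Harnack-type $\sup+\inf$ estimates and the pointwise decay estimate \eqref{4.27} in the style of \cite{rob2}, followed by an induction on the number of bubbles.
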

Notice that the hypothesis \eqref{Deltau} and \eqref{Deltau-} are natural, since for $m=1$ they already follow from \eqref{eq0}, \eqref{Vk} and \eqref{area}, and the counterexample quoted above show that they are necessary to some extent (see the first open problem in the last section). Moreover, contrary to \cite{rob2} and \cite{LS}, we do not assume that $V_0>0$. In fact, as already discussed in \cite{mar3}, if $V_0$ has changing sign, one can show using the results of \cite{mar2} that, if \eqref{Deltau-} holds, blow-up happens only at points where $V_0>0$. We also point out that when $m=2$, F. Robert \cite{rob3} proved a version of Theorem \ref{trm1} where the assumptions \eqref{area}, \eqref{Deltau} and \eqref{Deltau-} are replaced by $\|\Delta u_k\|_{L^1(\Omega)}\leq C$. This does not seem possible for $m>2$ without further assumptions of $\Delta^j u_k$ for $2\le j\le m-1$.

\medskip

A different approach to compactness can be given by working on a closed Riemannian manifold instead of an open set, see Druet-Robert \cite{DR}, Malchiodi \cite{mal}, Martinazzi \cite{mar3} and Ndiaye \cite{ndi}, or by assuming $\Omega$ bounded and imposing a Dirichlet or a Navier boundary condition, see Wei \cite{wei}, Robert-Wei \cite{RW} and Martinazzi-Petrache \cite{MP}. In this case the quantization is even stronger, as one shows that $\alpha_i=\Lambda_1$ in \eqref{quant} and $L=I$ in \eqref{quant2}. 
It turns out that the ideas of \cite{DR} and \cite{mar3} can be applied in the present context of an open domain if we  assume an a-priori $L^1$-bound on $\nabla u_k$ in place of the bound on $\Delta u_k$:

\begin{trm}\label{trm2}  Let $(u_k)\subset C^{2m}_{\loc}(\Omega)$ be solutions to \eqref{eq0} and \eqref{area}, where 
\begin{equation}\label{Vkbis}
V_k\to V_0 \quad \text{in } C^1_{\loc}(\Omega).
\end{equation}
Assume further that there is a ball $B_{\rho}(\xi)\subset\Omega$ such that
\begin{equation}\label{nablau2}
\|\nabla u_k\|_{L^1(B_\rho(\xi))}\leq C.
\end{equation}
Then there is a finite (possibly empty) set $S=\{x^{(1)},\ldots , x^{(I)}\}$ such that one of the following is true:
\begin{itemize}
\item[(i)] up to a subsequence $u_k\to u_0$ in $C^{2m-1}_{\loc}(\Omega\setminus S)$ for some $u_0\in C^{2m}(\Omega\setminus S)$ solving $(-\Delta)^m u_0=V_0e^{2mu_0}$, or
\item[(ii)] up to a subsequence $u_k\to -\infty$ locally uniformly in $\Omega \setminus S$.
\end{itemize}
If $S\neq \emptyset$ and $V(x^{(i)})>0$ for some $1\le i \le I$, then case $(ii)$ occurs.

\medskip

Moreover, if we also assume that
\begin{equation}\label{nablau3}
\|\nabla u_k\|_{L^1(\Omega)}\leq C,
\end{equation}
we have that in case $(i)$ $S=\emptyset$ and in case $(ii)$ $V_0(x^{(i)})>0$ for $1\leq i\leq I$ and
\begin{equation}\label{quant3}
V_k e^{2m u_k}dx\rightharpoonup \sum_{i=1}^I \Lambda_1 \delta_{x^{(i)}}
\end{equation}
in the sense of measures. In particular, for any open set $\Omega_0\subset\subset \Omega$ with $S\subset \Omega_0$ we have
\begin{equation}\label{quant4}
\int_{\Omega_0}V_k e^{2mu_k}dx\to I\Lambda_1.
\end{equation}
\end{trm}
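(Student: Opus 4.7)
The plan is to follow the strategy of Druet--Robert \cite{DR} and Martinazzi \cite{mar3}, adapted to the open-domain setting. The argument splits naturally into two stages: (a) establishing the dichotomy (i)/(ii) together with the finite blow-up set $S$, which parallels Theorem \ref{trm1}; and (b) using the global gradient bound \eqref{nablau3} to rule out all neck contributions and thus force the mass at each blow-up point to be exactly $\Lambda_1$. For stage (a), I would first use \eqref{nablau2} to produce an $L^1$ bound for $\Delta u_k$ on a slightly smaller concentric ball $B_{\rho'}(\xi)$, obtained by picking a cutoff $\varphi\in C_c^\infty(B_\rho(\xi))$ with $\varphi\equiv 1$ on $B_{\rho'}(\xi)$ and combining the Green representation for $(-\Delta)^m$ with the $L^1$ bound on the right-hand side of \eqref{eq0} coming from \eqref{Vkbis}--\eqref{area}. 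Applying the first part of Theorem \ref{trm1} then yields the dichotomy, the finite set $S$, and the weak convergence $V_ke^{2mu_k}dx\rightharpoonup\sum_{i} L_i\Lambda_1\delta_{x^{(i)}}$ for some positive integers $L_i$. The remaining refinements---namely $S=\emptyset$ in case (i), $V_0(x^{(i)})>0$ in case (ii), and $L_i=1$---will all be extracted in stage (b).

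For stage (b), at each $x^{(i)}\in S$ I would extract a standard bubble. Choose $x_k\to x^{(i)}$ with $u_k(x_k)=\max_{\overline{B_r(x^{(i)})}}u_k\to+\infty$ for a small $r>0$ such that $B_{2r}(x^{(i)})\cap S=\{x^{(i)}\}$, and set $\mu_k:=e^{-u_k(x_k)}\to 0$. The rescalings $v_k(y):=u_k(x_k+\mu_k y)+\log\mu_k$ satisfy
$$(-\Delta)^m v_k=V_k(x_k+\mu_k y)e^{2mv_k}\quad\text{on } B_{r/\mu_k}(0),$$
with $v_k(0)=0$ and $v_k\leq 0$, so by the classification in \cite{mar1}, $v_k\to\eta_0$ in $C^{2m-1}_{\loc}(\R{2m})$ after extraction. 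This forces $V_0(x^{(i)})>0$ and
$$\lim_{R\to\infty}\lim_{k\to\infty}\int_{B_{R\mu_k}(x_k)} V_k e^{2mu_k}\,dx=\Lambda_1.$$
The conclusion $L_i=1$ is thus equivalent to showing that no mass escapes into the annular neck:
$$\int_{B_r(x^{(i)})\setminus B_{R\mu_k}(x_k)} V_k e^{2mu_k}\,dx\longrightarrow 0 \quad\text{as } k\to\infty,\ R\to\infty.$$

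The main obstacle is this neck analysis. Following \cite{DR,mar3}, I would consider the spherical averages $\bar u_k(s):=\intm_{\partial B_s(x_k)} u_k\,d\sigma$ and derive a differential inequality for them using a Pohozaev-type identity on $B_s(x_k)$ together with \eqref{eq0}; the hypothesis \eqref{Vkbis}, which is strictly stronger than \eqref{Vk}, is needed precisely to control the remainder terms involving $\nabla V_k$. If the neck carried a positive amount of mass, a dyadic iteration would yield the lower bound $-s\bar u_k'(s)\gtrsim 1$ on a logarithmic range of scales in the neck; combined with a radial oscillation estimate this transfers to $|\nabla u_k|$ and produces
$$\int_{B_r(x^{(i)})\setminus B_{R\mu_k}(x_k)}|\nabla u_k|\,dx\gtrsim \log\bra{\frac{r}{R\mu_k}}\longrightarrow\infty,$$
contradicting \eqref{nablau3}. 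Hence $L_i=1$, and \eqref{quant3}--\eqref{quant4} follow. The same contradiction applied in case (i) excludes any blow-up point from existing, giving $S=\emptyset$. The delicate step is establishing the uniform lower bound for $-s\bar u_k'(s)$ across all dyadic neck scales: this requires iterating the Pohozaev identity with the absence of secondary concentrations at intermediate scales, which is where the argument truly uses the sharper information contained in \eqref{nablau3} and \eqref{Vkbis} as compared to the weaker hypotheses of Theorem \ref{trm1}.
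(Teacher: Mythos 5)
Your stage (a) is essentially sound: decomposing $u_k=w_k+h_k$ with $(-\Delta)^m w_k=V_ke^{2mu_k}$ and $h_k$ polyharmonic does upgrade \eqref{nablau2} to an interior $L^1$ bound on $\Delta u_k$, so the first part of Theorem \ref{trm1} yields the dichotomy and the finite set $S$ (the paper instead reproves the dichotomy directly in Proposition \ref{prop3bis}, showing $\nabla\varphi\equiv0$ for the profile $\varphi=\lim u_k/\beta_k$). Note, however, that the conclusion $\alpha_i=L_i\Lambda_1$ you quote belongs to the \emph{second} half of Theorem \ref{trm1} and requires $\|(\Delta u_k)^-\|_{L^1(\Omega)}\le C$ on all of $\Omega$; the decomposition only gives interior bounds on $\Delta u_k$, so this does not follow from \eqref{nablau3}. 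This is minor, since your stage (b) is meant to recompute the mass at each point anyway.

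The genuine gap is the contradiction mechanism in stage (b). You claim that positive neck mass forces $-s\bar u_k'(s)\gtrsim 1$ on a logarithmic range of scales and that this makes $\int_{\mathrm{neck}}|\nabla u_k|\,dx$ diverge like $\log\bra{r/(R\mu_k)}$. It does not: since $\bar u_k'(s)$ is the average of $\partial_\nu u_k$ over $\partial B_s$, the bound $-s\bar u_k'(s)\ge c$ only gives $\int_{\partial B_s}|\nabla u_k|\,d\sigma\ge c'\,s^{2m-2}$, hence $\int_{B_r\setminus B_{R\mu_k}}|\nabla u_k|\,dx\ge c''\,r^{2m-1}$, which stays bounded. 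Indeed the standard bubble itself has $-s\bar u_k'(s)\approx 2$ throughout its neck while $\|\nabla u_k\|_{L^1}$ remains bounded, so no contradiction with \eqref{nablau3} can be extracted from the size of this integral; what diverges logarithmically is the oscillation of $\bar u_k$, not the $L^1$ norm of the gradient. In the paper \eqref{nablau3} is used quite differently: it controls the polyharmonic part of $u_k$ on the boundary of a fixed ball (Proposition \ref{prop4bis}), which via the Green representation yields the estimates $\int_{B_r}|\nabla^\ell u_k|\,dx\le Cr^{2m-\ell}$ and $\inf_i|x-x_{i,k}|^\ell|\nabla^\ell u_k|\le C$ for $1\le\ell\le 2m-2$ (Propositions \ref{gradbis} and \ref{propgrad2bis}); these pointwise bounds are what control the boundary terms in the Pohozaev identity of \cite{DR} and \cite{mar3}, and it is the resulting quadratic relation for $\sigma_k(r)=\int_{B_r(x_{i,k})}V_ke^{2mu_k}\,dx$ that rules out neck energy and secondary bubbles. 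Without establishing those derivative estimates, your Pohozaev/dyadic iteration has no way to close.
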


The difference between Theorem \ref{trm1} and Theorem \ref{trm2} is that under the hypothesis of Theorem \ref{trm2} one can prove uniform bounds for $\nabla^\ell u_k$, $1\leq \ell\leq 2m-2$ (Propositions \ref{gradbis} and \ref{propgrad2bis}), which in turn allow us to apply a clever technique of Druet and Robert \cite{DR} to rule out the occurrence of multiple blow-up points. In Theorem \ref{trm1} one can only prove bounds for $\nabla^{\ell-2}\Delta u_k$, $2\leq \ell\leq 2m-1$ (Propositions \ref{grad} and \ref{propgrad2} below). This is not just a technical issue, as the result of Theorem \ref{trm2} is stronger than that of Theorem \ref{trm1}. Indeed X. Chen \cite{che} showed that already for $m=1$, under the assumptions of Theorem 1, there exist sequences with multiple blow-up points.

\medskip

The paper is organized as follows. In Section \ref{sec1} we prove Theorem \ref{trm1}, in section \ref{sec2}, we prove Theorem \ref{trm2} and in the last section we collect some open problems.
The letter $C$ always denotes a generic large constant which can change from line to line, and even within the same line.

\medskip

I am grateful to F. Robert for suggesting me to work on this problems.

\section{Proof of Theorem \ref{trm1}}\label{sec1}

In the proof of Theorem \ref{trm1} we use the strategy of extracting blow-up profiles (Proposition \ref{exh} below), in the spirit of Struwe \cite{str1}, \cite{str2} and of Br\'ezis-Coron \cite{BC1}, \cite{BC2}. We classify such profiles thanks to the results of \cite{mar1} and \cite{mar2}, and finally we use Harnack-type estimates inspired from \cite{rob2}. Since Propositions \ref{prop4} and \ref{grad} below don't work for $m=1$, in this section we shall assume that $m>1$. For the case $m=1$ we refer to \cite{LS}, noticing that their assumption $V_k\ge 0$ can be easily dropped (particularly in their Lemma 1), since there are no solutions to the equation
$$-\Delta u=Ve^{2u}\text{ in }\R{2},\quad \int_{\R{2}}e^{2u}dx<\infty, \quad V\equiv const<0,$$
see Theorem 1 in \cite{mar2}.
 
\begin{prop}\label{prop3} Let $(u_k)$ be a sequence of solutions to \eqref{eq0}-\eqref{area} satisfying \eqref{Deltau} for some ball $B_\rho(\xi)\subset\Omega$ and set
\begin{equation}\label{defS}
S:=\left\{ y\in \Omega:\lim_{r\to 0^+}\liminf_{k\to\infty}\int_{B_r(y)}|V_k| e^{2mu_k}dy\geq \frac{\Lambda_1}{2}   \right\}.
\end{equation}
Then $S$ is finite (possibly empty) and up to selecting a subsequence one of the following is true:
\begin{itemize}
\item[(i)] $u_k\to u_0$ in $C^{2m-1}_{\loc}(\Omega\backslash S)$ for some $u_0\in C^{2m}(\Omega\backslash S)$;
\item[(ii)] $u_k\to -\infty$ locally uniformly in $\Omega \backslash S$.
\end{itemize}
If $S\neq \emptyset$ and $V(x^{(i)})>0$ for some $1\le i \le I$, then case $(ii)$ occurs.
\end{prop}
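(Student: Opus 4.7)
The plan is a Br\'ezis--Merle-type analysis combined with a Harnack-type propagation driven by assumption \eqref{Deltau}. Finiteness of $S$ follows from \eqref{area}, \eqref{Vk} and \eqref{defS}: on any compact $K\subset\Omega$, $\sup_K|V_k|\le C_K$ gives $\int_K|V_k|e^{2mu_k}\,dx\le C_K\Lambda$, so a covering argument with the threshold $\Lambda_1/2$ bounds $\#(S\cap K)$, and $S$ is (locally) finite. For the upper bound outside $S$, fix $x_0\in\Omega\setminus S$ and a ball $B_{2r}(x_0)\subset\Omega\setminus S$ on which, after passing to a subsequence, $\int_{B_{2r}(x_0)}|V_k|e^{2mu_k}\,dx<\Lambda_1/2-\ve$. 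Decompose $u_k=v_k+h_k$ on $B_{2r}(x_0)$, where $v_k$ is the Green's-function representation of the right-hand side and $h_k$ is polyharmonic. The Br\'ezis--Merle-type estimate for the polyharmonic operator (cf.\ \cite{mar1}) gives some $p>2m$ with $\int_{B_{2r}(x_0)}e^{p|v_k|}\,dx\le C$. From \eqref{area} and the elementary inequality $t\le e^{2mt}/(2m)$ for $t\ge 0$ we have $\int_\Omega u_k^+\,dx\le\Lambda/(2m)$; combined with standard interior estimates for the polyharmonic $h_k$, this yields $h_k^+\le C$ on $B_r(x_0)$. Hence $e^{(2m+\delta)u_k}$ is bounded in $L^1(B_r(x_0))$ for some $\delta>0$, and iterated elliptic regularity gives $u_k^+$ uniformly bounded in $L^\infty_{\loc}(\Omega\setminus S)$.

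The dichotomy then follows by a Harnack argument once one has uniform control on $\Delta u_k$ away from $S$. The $L^\infty_{\loc}(\Omega\setminus S)$ bound on $V_ke^{2mu_k}$ together with the seed bound \eqref{Deltau} propagates to $\|\Delta u_k\|_{L^\infty_{\loc}(\Omega\setminus S)}\le C$ via a Br\'ezis--Merle analysis applied to the order $m-1$ polyharmonic equation $(-\Delta)^{m-1}(\Delta u_k)=-V_ke^{2mu_k}$ satisfied by $\Delta u_k$. On any compact $K'\subset\subset\Omega\setminus S$, solve $\Delta w_k=\Delta u_k$ on a slight enlargement with $w_k$ bounded; then $h_k:=u_k-w_k$ is harmonic with $h_k^+$ bounded. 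Applying Harnack's inequality to the nonnegative harmonic function $\sup_{K'}h_k+1-h_k$ gives the dichotomy on $K'$: either $h_k$ is uniformly bounded (yielding case (i), with $C^{2m-1}_{\loc}$ convergence and $u_0\in C^{2m}(\Omega\setminus S)$ via standard elliptic bootstrap on the equation) or $\inf_{K'}h_k\to-\infty$ (yielding case (ii)). Since $2m\ge 2$ and $S$ is finite, $\Omega\setminus S$ is connected; the loci of boundedness and of $-\infty$-divergence are both open, so connectedness forces a global dichotomy on $\Omega\setminus S$.

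Finally, if case (i) held with $V_0(x^{(i)})>0$, then near $x^{(i)}$ one has $V_k>0$ and $V_k e^{2mu_k}\,dx\rightharpoonup V_0 e^{2mu_0}\,dx+\sum_j\alpha_j\delta_{x^{(j)}}$ with $\alpha_i\ge\Lambda_1/2$ by \eqref{defS}. Since the fundamental solution of $(-\Delta)^m$ in $\R{2m}$ is $-(2/\Lambda_1)\log|x|$ (the constant read off from the standard bubble $\eta_0$ of \eqref{eta0}, which has mass $\Lambda_1$ and asymptotics $\eta_0(x)\sim-2\log|x|$), Green's representation forces the singular expansion $u_0(x)\sim-(2\alpha_i/\Lambda_1)\log|x-x^{(i)}|$ as $x\to x^{(i)}$. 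Consequently $e^{2mu_0}\sim|x-x^{(i)}|^{-4m\alpha_i/\Lambda_1}$ is non-integrable near $x^{(i)}$ (since $\alpha_i\ge\Lambda_1/2$), contradicting $\int_{\Omega\setminus S} e^{2mu_0}\,dx\le\liminf_k\int_\Omega e^{2mu_k}\,dx\le\Lambda$ by Fatou. Hence case (ii) must occur. The main obstacle in this plan is the $L^1\!\to\!L^\infty_{\loc}$ propagation for $\Delta u_k$ used in the dichotomy step: because $\Delta u_k$ has no a priori sign, the Br\'ezis--Merle analysis of its order $m-1$ polyharmonic equation is delicate, and is exactly what makes the case $m\ge 2$ substantively harder than the classical $m=1$ theory of \cite{BM}.
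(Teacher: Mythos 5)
Your plan never uses the hypothesis \eqref{Deltau} until after you have already claimed the key conclusion ($u_k^+$ bounded in $L^\infty_{\loc}(\Omega\setminus S)$), and that is where it breaks. The step ``$\|h_k^+\|_{L^1(B_{2r})}\le C$ combined with standard interior estimates for the polyharmonic $h_k$ yields $h_k^+\le C$ on $B_r$'' is false for $m\ge 2$: interior estimates for $\Delta^m$-harmonic functions require two-sided $L^1$ control, and unlike harmonic functions, polyharmonic functions satisfy no sub-mean-value inequality. Concretely, $h_k(x)=k\bigl(\ve_k-|x'|^2\bigr)$ with $x'=(x_1,\dots,x_{2m-1})$ is biharmonic (hence in $\ker\Delta^m$ for $m\ge2$), and for a suitable choice $\ve_k\to0$ one has $\|h_k^+\|_{L^1(B_1)}\to0$ while $\sup_{B_1}h_k=k\ve_k\to\infty$. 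This is precisely the mechanism behind the Adimurthi--Robert--Struwe example of blow-up on a hyperplane, which shows that the conclusion of the proposition is \emph{false} if \eqref{Deltau} is dropped; so no argument that reaches the $L^\infty_{\loc}$ bound on $u_k^+$ without invoking \eqref{Deltau} can be correct. A second, related gap: when \eqref{Deltau} finally enters, it gives $L^1$ control of $\Delta u_k$ only on the single ball $B_\rho(\xi)$, and your proposed Br\'ezis--Merle analysis of $(-\Delta)^{m-1}(\Delta u_k)=-V_ke^{2mu_k}$ would need to propagate this to all of $\Omega\setminus S$. But the $\Delta^{m-1}$-harmonic part of $\Delta u_k$ is only controlled in $L^1$ on that one ball, and even a harmonic function with small $L^1$ norm on a sub-ball can be arbitrarily large on a nearby compact set (e.g.\ $\mathrm{Re}\,(x_1+ix_2)^k$), so no quantitative local-to-global propagation is available.

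The paper sidesteps both difficulties by quoting the trichotomy of Theorem~1 in \cite{mar3}: either (i) holds, or $u_k\to-\infty$ off $S\cup\Gamma$ where $\Gamma$ is a closed set of dimension $\le 2m-1$, and moreover $u_k/\beta_k\to\varphi$ with $\Delta^m\varphi=0$, $\varphi\le0$, $\varphi\not\equiv0$, $\Gamma=\{\varphi=0\}$. Hypothesis \eqref{Deltau} is then used only once, on the \emph{limit profile}: if $\Delta\varphi\not\equiv0$ then, by analyticity, $\int_{B_\rho(\xi)}|\Delta\varphi|>0$, whence $\int_{B_\rho(\xi)}|\Delta u_k|\sim\beta_k\int_{B_\rho(\xi)}|\Delta\varphi|\to\infty$, a contradiction; so $\varphi$ is harmonic, nonpositive and nontrivial, hence strictly negative by the strong maximum principle, i.e.\ $\Gamma=\emptyset$. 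It is the unique continuation of the analytic function $\Delta\varphi$ that transfers information from the one ball $B_\rho(\xi)$ to all of $\Omega$ --- something your quantitative estimates on $u_k$ itself cannot achieve. If you want a self-contained proof you would essentially have to reprove \cite{mar3}; otherwise the argument should be restructured around this rescaling/unique-continuation idea. (The final step of your proposal, ruling out case (i) at a point with $V_0(x^{(i)})>0$ via a logarithmic expansion of $u_0$, is also delicate for $m\ge2$ because the polyharmonic singular part of $u_0$ at $x^{(i)}$ is not a priori excluded, but this is secondary to the gaps above.)
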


\begin{proof}
By Theorem 1 in \cite{mar3} (compare \cite{ARS}) we have that $S$ is finite and either
\begin{itemize}
\item[$(a)$] $u_k\to u_0$ in $C^{2m-1}_{\loc}(\Omega\backslash S)$ for some $u_0\in C^{2m}(\Omega\backslash S)$, or
\item[$(b)$] $u_k\to -\infty$ locally uniformly in $\Omega \backslash (S\cup \Gamma)$, where $\Gamma$ is a closed set of Hausdorff dimension at most $2m-1$. Moreover there are numbers $\beta_k\to \infty$ such that
\begin{equation}\label{ubeta}
\frac{u_k}{\beta_k}\to \varphi\quad \text{in } C^{2m-1}_{\loc}(\Omega \backslash (S\cup \Gamma)),
\end{equation}
where $\varphi\in C^\infty(\Omega\backslash S)$, $\Gamma=\{x\in \Omega\setminus S:\varphi(x)=0\}$ and
\begin{equation}\label{phi}
\Delta^m \varphi\equiv 0,\quad \varphi\leq 0,\quad \varphi \not\equiv 0 \quad \text{in }\Omega\backslash S.
\end{equation}
\end{itemize}
Clearly case $(a)$ corresponds to case $(i)$ in the proposition. We need to show that if $(b)$ occurs, then $\Gamma=\emptyset$, so that $\varphi<0$ on $\Omega\backslash S$ and case $(ii)$ follows from \eqref{ubeta}. In order to show that $\Gamma=\emptyset$, observe that $\Delta\varphi\equiv 0$ in $\Omega\backslash S$. Otherwise, since $\Delta \varphi$ is analytic\footnote{we have $\Delta^{m-1}(\Delta\varphi)=0$, and polyharmonic functions are analytic.}, we would have
$$\int_{B_\rho(\xi)}|\Delta\varphi|dx>0,$$
where $B_\rho(\xi)\subset\Omega$ is as in \eqref{Deltau}. Then \eqref{ubeta} would imply
$$\lim_{k\to\infty}\int_{B_\rho(\xi)}|\Delta u_k|dx=\lim_{k\to\infty}\beta_k\int_{B_\rho(\xi)}|\Delta \varphi|dx=+\infty,$$
contradicting \eqref{Deltau}. Therefore $\Delta\varphi\equiv 0$. Then the maximum principle and \eqref{phi} imply that $\varphi<0$ in $\Omega\backslash S$, i.e. $\Gamma=\emptyset$, as wished. Also the last claim follows from Theorem 1 in \cite{mar3}.
\end{proof}

Proposition \ref{prop3} completes the proof of the first part of Theorem \ref{trm1}. In the remaining part of this section we shall assume that $(u_k)$ satisfies all the hypothesis of Theorem \ref{trm1}, including \eqref{Deltau-} in particular, and we shall prove the second part of Theorem \ref{trm1}. If $S=\emptyset$, it is clear that the proof of Theorem \ref{trm1} is complete. Therefore we shall also assume that $S\neq \emptyset$, and we shall prove that consequently we are in case $(ii)$ of Theorem 1.

\begin{prop}\label{prop4}
For every open set $\Omega_0\subset\subset \Omega\backslash S$ there is a constant $C(\Omega_0)$ independent of $k$ such that
\begin{equation}\label{stime}
\|\Delta u_k\|_{C^{2m-3}(\Omega_0)}\leq C(\Omega_0).
\end{equation}
\end{prop}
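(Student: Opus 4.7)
The plan is to exploit case (ii) of Proposition \ref{prop3}: $u_k\to -\infty$ locally uniformly in $\Omega\setminus S$, hence $f_k:=V_ke^{2mu_k}\to 0$ locally uniformly in $\Omega\setminus S$. In particular $f_k$ is uniformly bounded in $L^\infty(K)$ for every $K\subset\subset\Omega\setminus S$ and in $L^1(\Omega)$ by \eqref{Vk}--\eqref{area}. Writing $w_k:=-\Delta u_k$, the equation reads $(-\Delta)^{m-1}w_k=f_k$, and the desired estimate is a $C^{2m-3}$ bound for $w_k$ on compact subsets of $\Omega\setminus S$.

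Since $S$ is finite and $\Omega\setminus S$ is open and connected, fix a smoothly bounded, open, connected set $\Omega_1$ with $\Omega_0\cup B'\subset\Omega_1\subset\subset\Omega\setminus S$, where $B'\subset B_\rho(\xi)\setminus S$ is a ball. I would decompose $w_k=v_k+h_k$ on $\Omega_1$, with $v_k$ the unique solution of $(-\Delta)^{m-1}v_k=f_k$ in $\Omega_1$ under Navier boundary conditions and $h_k:=w_k-v_k$, which is $(m-1)$-polyharmonic on $\Omega_1$. Interior and boundary Calderon--Zygmund estimates together with Sobolev embedding yield $\|v_k\|_{C^{2m-3}(\overline{\Omega_1})}\le C$, in particular $\|v_k\|_{L^1(\Omega_1)}\le C$; the proof then reduces to an analogous bound on $h_k$, which by interior estimates for the polyharmonic operator follows from a uniform $L^1(\Omega_1)$-bound on $h_k$.

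The $L^1(\Omega_1)$-bound is where both hypotheses \eqref{Deltau} and \eqref{Deltau-} enter. The former, together with $\|v_k\|_{L^1(B')}\le C$, gives $\|h_k\|_{L^1(B')}\le C$. The latter, rewritten as $\|w_k^+\|_{L^1(\Omega)}\le C$ (since $w_k^+=-(\Delta u_k)^-$), together with $\|v_k\|_{L^\infty(\Omega_1)}\le C$, gives $\|h_k^+\|_{L^1(\Omega_1)}\le C$. Interior regularity for polyharmonic functions on $B'$ provides a pointwise two-sided bound on $h_k$ at some interior point; the one-sided $L^1$-bound on $h_k^+$ then propagates to a two-sided bound on $h_k$ over compact subsets of $\Omega_1$ via a Harnack-type argument for $(-\Delta)^{m-1}$---this is precisely the ``Harnack-type estimate inspired from \cite{rob2}'' mentioned in the introduction. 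The standard device is Almansi's decomposition $h_k=\sum_{j=0}^{m-2}|x-x_0|^{2j}h_{k,j}$ with each $h_{k,j}$ harmonic on star-shaped subdomains, which reduces the propagation along chains of overlapping balls in $\Omega_1$ to the classical Harnack inequality.

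The main technical obstacle is this Harnack-type propagation for $(-\Delta)^{m-1}$: the classical Harnack inequality fails for polyharmonic functions of order $\ge 2$, so one must either use Almansi's decomposition or perform a direct Green-function analysis to turn the one-sided $L^1$-control from \eqref{Deltau-} and the local two-sided control from \eqref{Deltau} into the uniform two-sided $L^\infty$-bound on $h_k$ needed on a neighborhood of $\Omega_0$. Both hypotheses play a role that cannot be dispensed with: \eqref{Deltau} anchors the bound at one ball, and \eqref{Deltau-} supplies the one-sided global control that feeds into the Harnack propagation.
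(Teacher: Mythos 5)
Your overall architecture is the same as the paper's: anchor a bound for $\Delta u_k$ on a ball inside $B_\rho(\xi)$ using \eqref{Deltau} together with the local boundedness of $\Delta^m u_k=(-1)^mV_ke^{2mu_k}$ on compact subsets of $\Omega\setminus S$, then use the one-sided global control \eqref{Deltau-} to propagate that bound. (The paper packages the second half as a dichotomy --- either $\Delta u_k\to+\infty$ locally uniformly in $\Omega\setminus S$ or it is locally uniformly bounded --- and uses the anchor on $B_{\rho/2}(\xi)$ to exclude the first branch; logically that is exactly your propagation step.) Your sign bookkeeping with $w_k=-\Delta u_k$ is consistent, and the bounds $\|v_k\|_{C^{2m-3}(\overline{\Omega_1})}\le C$, $\|h_k\|_{L^1(B')}\le C$ and $\|\max\{h_k,0\}\|_{L^1(\Omega_1)}\le C$ are all correctly derived.

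The one genuine soft spot is the step you yourself flag, and the device you propose does not close it. Almansi's decomposition $h_k=\sum_{j=0}^{m-2}|x-x_0|^{2j}h_{k,j}$ is of no help here: the one-sided bound on $h_k$ does not pass to the individual harmonic components $h_{k,j}$ (they may carry large cancelling contributions of both signs), so there is nothing to which the classical Harnack inequality can be applied. The tool that actually works --- and the one implicitly behind the paper's appeal to ``elliptic estimates'' and behind \cite{mar1}, Lemma 20 --- is Pizzetti's formula: for $\Delta^{m-1}h=0$ one has
\begin{equation*}
\frac{1}{|B_R(x)|}\int_{B_R(x)}h\,dy=\sum_{j=0}^{m-2}c_jR^{2j}\Delta^jh(x),\qquad c_j>0.
\end{equation*}
From $\|h_k\|_{L^1(B')}\le C$ and interior estimates for $\Delta^{m-1}$ you control $\Delta^jh_k(x_0)$ at the center $x_0$ of $B'$ for all $0\le j\le m-2$; Pizzetti then bounds $\int_{B_R(x_0)}h_k\,dy$ \emph{two-sidedly} for every $R$ with $B_R(x_0)\subset\Omega_1$, not only for small $R$. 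Combined with $\int_{\Omega_1}\max\{h_k,0\}\,dx\le C$ this gives
\begin{equation*}
\int_{B_R(x_0)}\max\{-h_k,0\}\,dx=\int_{B_R(x_0)}\max\{h_k,0\}\,dx-\int_{B_R(x_0)}h_k\,dx\le C,
\end{equation*}
i.e.\ a two-sided $L^1$ bound on $h_k$ on the largest ball centered at $x_0$ contained in $\Omega_1$. Interior estimates upgrade this to $C^N$ bounds on the half ball, which in turn control $\Delta^jh_k$ at every point there, so the argument iterates along a finite chain of balls covering $\overline{\Omega_0}$ (here connectedness of $\Omega_1$ and $2m\ge 4$ are used). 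With this replacement for the Almansi/Harnack device your proof is complete and coincides in substance with the paper's.
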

\begin{proof}
If case $(i)$ of Proposition \ref{prop3} occurs the proof of \eqref{stime} is trivial, hence we shall assume that we are in case $(ii)$. Up to restricting the ball $B_\rho(\xi)$ given in \eqref{Deltau}, we can assume that $B_{2\rho}(\xi)\cap S=\emptyset$, so that $u_k\leq C=C(\rho)$ on $B_\rho(\xi)$. Consequently $|\Delta^m u_k|\leq C$ on $B_\rho(\xi)$. This, \eqref{Deltau} and elliptic estimates (see e.g. \cite{mar1}, Lemma 20) imply that
\begin{equation}\label{5bis}
\|\Delta u_k\|_{C^{2m-3}(B_{\rho/2}(\xi))}\leq C.
\end{equation}
Elliptic estimates and \eqref{Deltau-} imply that either $\Delta u_k\to +\infty$ locally uniformly in $\Omega\setminus S$, or $(\Delta u_k)_{k\in\mathbb{N}}$ is uniformly bounded locally in $\Omega\setminus S$. In the first case \eqref{5bis} cannot hold, so we are in the second situation, and \eqref{stime} follows at once from elliptic estimates, since $|\Delta^m u_k|\le C(\Omega_0)$ on $\Omega_0$.
\end{proof}

\begin{prop}\label{grad} For every open set $\Omega_0\subset\subset \Omega$ there is a constant $C$ independent of $k$ such that 
\begin{equation}\label{grad2}
\int_{B_r(x_0)}|\nabla^{\ell-2}\Delta u_k|dx\leq C r^{2m-\ell},
\end{equation}
for $2\leq \ell \leq 2m-1$ and for every ball $B_r(x_0)\subset \Omega_0$.
\end{prop}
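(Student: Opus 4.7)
The plan is to decompose $\Delta u_k$ on a neighborhood of each blow-up point as a Riesz-type potential of the source $f_k:=V_ke^{2mu_k}$ plus a polyharmonic remainder, and to use Proposition \ref{prop4} to control the boundary data of that remainder. The starting point is the identity $(-\Delta)^{m-1}\Delta u_k=-f_k$ together with the fact that in dimension $2m$ the fundamental solution of $(-\Delta)^{m-1}$ is a constant multiple of $|x-y|^{-2}$.

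First I would choose $R_0>0$ so small that the closed balls $\overline{B_{2R_0}(x^{(i)})}$ lie in $\Omega$, are pairwise disjoint, and satisfy $\overline{B_{2R_0}(x^{(i)})}\cap S=\{x^{(i)}\}$. On the compact set $K:=\overline{\Omega_0}\setminus\bigcup_i B_{R_0/2}(x^{(i)})$, which is contained in $\Omega\setminus S$, Proposition \ref{prop4} gives $\|\Delta u_k\|_{C^{2m-3}(K)}\leq C$, so that $\int_{B_r(x_0)\cap K}|\nabla^{\ell-2}\Delta u_k|\,dx\leq Cr^{2m}\leq Cr^{2m-\ell}$ since $r\leq\diam(\Omega_0)$. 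A standard covering argument (handling balls of radius $r\geq R_0/4$ by subdivision into smaller balls) then reduces the problem to proving \eqref{grad2} for every $B_r(x_0)\subset B_{R_0}(x^{(i)})$ with $r\leq R_0/4$, for each fixed $x^{(i)}\in S$.

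Fix such an $x^{(i)}$ and set $B:=B_{R_0}(x^{(i)})$. Let $G$ be the Green function of $(-\Delta)^{m-1}$ on $B$ with Navier boundary conditions. Since $(-\Delta)^{m-1}\Delta u_k=-f_k$ one has
$$\Delta u_k(x)=-\int_{B}G(x,y)f_k(y)\,dy+h_k(x),\qquad x\in B,$$
where $h_k$ is polyharmonic of order $m-1$ on $B$ with $\Delta^j h_k|_{\partial B}=\Delta^{j+1}u_k|_{\partial B}$ for $j=0,\ldots,m-2$. Standard estimates for $G$ give $|\nabla_x^j G(x,y)|\leq C|x-y|^{-2-j}$. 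Fubini combined with the elementary bound $\int_{B_r(x_0)}|x-y|^{-\ell}\,dx\leq C\min(r^{2m-\ell},\,r^{2m}|x_0-y|^{-\ell})$ (valid since $\ell\leq 2m-1<2m$) and with $\|f_k\|_{L^1(B)}\leq\|V_k\|_{L^\infty(B)}\Lambda\leq C$ (from \eqref{Vk} and \eqref{area}) then yields the desired $Cr^{2m-\ell}$ estimate on the potential piece.

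For the polyharmonic piece, observe that $\partial B\subset\Omega\setminus S$, so Proposition \ref{prop4} controls $\|\Delta u_k\|_{C^{2m-3}}$ in a neighborhood of $\partial B$ uniformly in $k$. Hence each Navier datum $\Delta^{j+1}u_k|_{\partial B}$ is bounded in $C^{2m-3-2j}(\partial B)$, and standard (Agmon--Douglis--Nirenberg) boundary regularity for the Navier problem of $(-\Delta)^{m-1}$ gives $\|h_k\|_{C^{2m-3}(\overline B)}\leq C$, whence $\int_{B_r(x_0)}|\nabla^{\ell-2}h_k|\,dx\leq Cr^{2m}\leq Cr^{2m-\ell}$. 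Summing the two estimates completes the proof. The main technical obstacle is the uniform polyharmonic boundary estimate on $h_k$: one must verify that the boundary norms supplied by Proposition \ref{prop4} feed into a Schauder-type estimate for the Navier problem with constants independent of $k$. This is classical but is the point where Proposition \ref{prop4} gets used in an essential way to pass from qualitative smoothness away from $S$ to the quantitative scaling bound across $S$.
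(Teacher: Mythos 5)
Your argument is essentially the paper's proof: both represent $\Delta u_k$ via the Green function of $\Delta^{m-1}$ with Navier conditions on a ball containing at most one point of $S$ and whose boundary stays away from $S$, bound the Riesz-potential part of $V_ke^{2mu_k}$ by Fubini together with $\int_{B_r(x_0)}|x-y|^{-\ell}\,dx\le Cr^{2m-\ell}$ and the $L^1$-bound on $e^{2mu_k}$, and control the boundary contribution via Proposition \ref{prop4}. The one fragile point is your claim that Agmon--Douglis--Nirenberg estimates give $\|h_k\|_{C^{2m-3}(\overline B)}\le C$ up to $\partial B$ from Navier data that are only bounded in $C^{2m-3-2j}(\partial B)$ (the top-order datum is merely $C^1$, which does not yield $C^{2m-3}$ regularity at the boundary); the paper avoids this by placing $B_r(x_0)\subset B_{2\delta}(\xi)$ while representing on the larger ball $B_{4\delta}(\xi)$, so that only sup-bounds of the boundary data and the smoothness of the Poisson-type kernels away from $\partial B_{4\delta}(\xi)$ are needed --- a fix you should incorporate, e.g.\ by estimating $h_k$ only on $B_{3R_0/2}(x^{(i)})$ while solving the Navier problem on $B_{2R_0}(x^{(i)})$.
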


\begin{proof} Fix
$$\delta=\frac{1}{16}\min\left\{\min_{1\leq i\neq j\leq I} |x^{(i)}-x^{(j)}|,\dist(\de\Omega,\de\Omega_0) \right\}.$$
By a covering argument, it is enough to prove \eqref{grad2} for $0<r\leq \delta$. Given $B_r(x_0)\subset\Omega_0$ with $r\le \delta$, we can choose a ball $B_{4\delta} (\xi)\subset\Omega$ such that $B_r(x_0)\subset B_{2\delta}(\xi)$, $\dist(\de B_{4\delta}(\xi),S)\geq 2\delta$.
For $x\in B_{2\delta}(\xi)$, let $G_x(y)$ be the Green function for the operator $\Delta^{m-1}$ in $B_{4\delta}(\xi)$ with respect to the Navier boundary condition:
$$\Delta^{m-1}G_x=\delta_x \text{ in }B_{4\delta}(\xi),\quad G_x =\Delta G_x=\ldots=\Delta^{m-2}G_x=0\text{ on }\de B_{4\delta}(\xi).$$ 
Then we can write
\begin{equation}\label{poisson}
\begin{split}
\Delta u_k(x)=&\int_{B_{4\delta}(\xi)} G_x(y)\Delta^{m-1}\Delta u_k(y)dy\\
&+\sum_{j=0}^{m-2}\int_{\de B_{4\delta}(\xi)}\frac{\de}{\de\nu}(\Delta^{m-j-2}G_x) \Delta^j(\Delta u_k)d\sigma.
\end{split}
\end{equation}
Differentiating and using the bound $|\nabla^{\ell-2}G_x(y)|\leq \frac{C}{|x-y|^\ell}$ (see \cite{DAS}) and \eqref{stime} on $\de B_{4\delta}(\xi)$, we infer for $x\in B_{2\delta}(\xi)$
\begin{equation}\label{nablau}
\begin{split}
|\nabla^{\ell-2}\Delta u_k(x)|\leq & C\int_{B_{4\delta}(\xi)}\frac{|V_k(y)|e^{2mu_k(y)}}{|x-y|^\ell}dy\\
&+ C \sum_{j=0}^{m-2}\sup_{\de B_{4\delta}(\xi)}\Big(\Delta^{j+1}u_k \Big)\int_{\de B_{4\delta}(\xi)}\frac{d\sigma(y)}{|x-y|^{\ell+2m-2j-3}}\\
\le & C\int_{B_{4\delta}(\xi)}\frac{e^{2mu_k(y)}}{|x-y|^\ell}dy+C.
\end{split}
\end{equation}
Integrating on $B_r(x_0)$ and using Fubini's theorem, we finally get
\begin{equation*}
\begin{split}
\int_{B_r(x_0)}|\nabla^{\ell-2}\Delta u_k(x)|dx\leq& C\int_{B_r(x_0)}\int_{B_{4\delta}(\xi)}\frac{e^{2mu_k(y)}}{|x-y|^\ell}dydx +Cr^{2m}\\
\leq &C\int_{B_{4\delta}(\xi)}e^{2mu_k(y)}\bigg(\int_{B_r(x_0)}\frac{1}{|x-y|^\ell}dx\bigg)dy+Cr^{2m}\\
\leq & C r^{2m-\ell}\int_{B_{4\delta}(\xi)}e^{2mu_k(y)}dy+Cr^{2m}\\
\leq & Cr^{2m-\ell}+Cr^{2m}\le Cr^{2m-\ell},
\end{split}
\end{equation*}
where in the last inequality we used that $r\le \delta$.
\end{proof}

\begin{prop}\label{exh} Let $\Omega_0\subset\subset\Omega$ be an open set such that $S\subset\Omega_0$. Then up to a subsequence we have
\begin{equation}\label{supinf}
\lim_{k\to\infty}\sup_{\Omega_0}u_k=+\infty,
\end{equation}
and case $(ii)$ of Theorem \ref{trm1} occurs.
There exist $L\ge I$ converging sequences of points $x_{i,k}\to x^{(i)}\in\Omega$ such that $u_k(x_{i,k})\to\infty$ as $k\to\infty$, $S=\{x^{(1)},\ldots, x^{(L)}\}$, $V(x^{(i)})>0$ for $1\leq i\leq L$, and there exist $L$ sequences of positive numbers
\begin{equation}\label{mu}
\mu_{i,k}:=2\bigg(\frac{(2m-1)!}{V_0(x^{(i)})}\bigg)^\frac{1}{2m}e^{-u_k(x_{i,k})}\to 0
\end{equation}
such that the following holds:
\begin{itemize}
\item[(a)] for $1\leq i,j\leq L$, $i\neq j$
$$\lim_{k\to\infty}\frac{|x_{i,k}-x_{j,k}|}{\mu_{i,k}}=\infty;$$
\item[(b)] setting $\eta_{i,k}:=u_k(x_{i,k}+\mu_{i,k}x)-u_k(x_{i,k})+\log 2$, one has
$$\lim_{k\to\infty} \eta_{i,k}(x)=\eta_0(x)=\log\frac{2}{1+|x|^2}\quad \text{in }C^{2m-1}_{\loc}(\R{2m}),$$
and
\begin{equation}\label{energia}
\lim_{R\to\infty}\lim_{k\to\infty}\int_{B_{R\mu_{i,k}}(x_{i,k})}V_ke^{2mu_k}dx=\Lambda_1;
\end{equation}
\item[(c)] for every $\Omega_0\subset\subset\Omega$ we have
\begin{equation}\label{stima}
\inf_{1\leq i\leq L}|x-x_{i,k}|e^{u_k(x)}\leq C= C(\Omega_0).
\end{equation}
\end{itemize}
\end{prop}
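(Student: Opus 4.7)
I will use the classical Struwe--Br\'ezis-Coron bubble-exhaustion strategy, adapted to the polyharmonic setting via the classification in \cite{mar1,mar2}. First I verify that we are in case $(ii)$ of Theorem \ref{trm1}: since $S\neq\emptyset$, for every $x^{(i)}\in S$ the mass $\int_{B_r(x^{(i)})}|V_k|e^{2mu_k}dx$ stays $\geq \Lambda_1/2$ for all small $r$ and large $k$, which prevents $u_k$ from converging locally uniformly and hence rules out case $(i)$ of Proposition \ref{prop3}; thus case $(ii)$ is forced and \eqref{supinf} holds. Positivity of $V_0$ at each $x^{(i)}$ follows from \cite{mar2} combined with \eqref{Deltau-}, as pointed out in the remark following Theorem \ref{trm1}: under the sign control on $\Delta u_k$, blow-up cannot occur where $V_0\leq 0$.

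For the bubble at each $x^{(i)}$, pick $x_{i,k}\to x^{(i)}$ realising the supremum of $u_k$ on a small neighbourhood of $x^{(i)}$; then $u_k(x_{i,k})\to\infty$ and so $\mu_{i,k}\to 0$. A direct computation from \eqref{eq0} and \eqref{mu} yields
\begin{equation*}
(-\Delta)^m\eta_{i,k}(y)=(2m-1)!\,\frac{V_k(x_{i,k}+\mu_{i,k}y)}{V_0(x^{(i)})}\,e^{2m\eta_{i,k}(y)},
\end{equation*}
with $\eta_{i,k}(0)=\log 2$ the maximum of $\eta_{i,k}$ on a ball of radius $r/\mu_{i,k}\to\infty$. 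Elliptic estimates together with this $L^\infty$-upper-bound produce, along a subsequence, $\eta_{i,k}\to\eta$ in $C^{2m-1}_{\loc}(\R{2m})$, where $\eta$ solves $(-\Delta)^m\eta=(2m-1)!e^{2m\eta}$ on $\R{2m}$ with $\int_{\R{2m}}e^{2m\eta}dx<\infty$ (by Fatou and \eqref{area}) and $\eta(0)=\log 2=\max\eta$. At this point the rescaled form of Proposition \ref{grad} with $\ell=2$ gives $\int_{B_R}|\Delta\eta|dy\leq CR^{2m-2}$, which excludes the non-spherical (Chang-Chen) profiles whose Laplacian fails to have the required decay; by the classification of \cite{mar1,mar2} $\eta$ is therefore a standard bubble, and the normalisation at the origin forces $\eta=\eta_0$. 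The energy identity \eqref{energia} then follows from a change of variables in $\int V_k e^{2mu_k}dx$, the convergence $\eta_{i,k}\to\eta_0$, and \eqref{eta0}. This identification is the main obstacle of the proof: the content of \eqref{Deltau} and \eqref{Deltau-} must survive the rescaling in order to eliminate the Chang-Chen bubbles, and it is precisely the estimates of Propositions \ref{prop4} and \ref{grad} that encode this survival.

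Finally I iterate. Given already-selected concentration sequences $(x_{j,k})_{j=1}^{\ell}$ satisfying $(a)$--$(b)$, consider $\phi_{\ell,k}(x):=\min_{1\leq j\leq\ell}|x-x_{j,k}|e^{u_k(x)}$. If $\sup_{\Omega_0}\phi_{\ell,k}$ is bounded in $k$, then $(c)$ holds with $L=\ell$ and the process stops. Otherwise choose $x_{\ell+1,k}\in\overline{\Omega_0}$ realising the supremum, define $\mu_{\ell+1,k}$ by \eqref{mu}, and apply the bubble-extraction of the previous paragraph; the maximality of $\phi_{\ell,k}(x_{\ell+1,k})$ guarantees both the separation $|x_{\ell+1,k}-x_{j,k}|/\mu_{\ell+1,k}\to\infty$ for $j\leq\ell$, which is $(a)$, and the uniform $L^\infty$-upper-bound on $\eta_{\ell+1,k}$ needed to iterate the previous argument. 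Each new bubble contributes at least $\Lambda_1$ to the mass by \eqref{energia}, and the total mass on $\Omega_0$ is bounded using \eqref{Vk} and \eqref{area}, so the procedure terminates after finitely many steps; the resulting limit points $x^{(j)}$ all belong to $S$ (they carry concentrated mass), and conversely every $x^{(i)}\in S$ is captured by construction, giving $L\geq I$ and $S=\{x^{(1)},\ldots,x^{(L)}\}$.
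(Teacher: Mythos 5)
Your overall strategy coincides with the paper's: bubble exhaustion in the spirit of Struwe and Br\'ezis--Coron, identification of the rescaled limits via the classification of \cite{mar1} and \cite{mar2}, and the Druet--Robert selection function $\min_j|x-x_{j,k}|e^{u_k(x)}$ to iterate and to obtain $(a)$ and $(c)$. The analytic core is sound: the limiting equation for $\eta_{i,k}$, the scaling computation turning Proposition \ref{grad} (with $\ell=2$) into $\int_{B_R}|\Delta\eta|\,dy\le CR^{2m-2}$ so as to exclude the Chang--Chen profiles, the change of variables giving \eqref{energia}, and the termination of the induction via \eqref{Vk} and \eqref{area} all match what the paper does (the paper delegates the classification step to Proposition 8 of \cite{mar3}, which rests on exactly the estimate you invoke).

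There is, however, a genuine gap in your first paragraph. You assert that $S\neq\emptyset$ by itself rules out case $(i)$ of Proposition \ref{prop3}, because the mass $\int_{B_r(x^{(i)})}|V_k|e^{2mu_k}dx\ge\Lambda_1/2$ ``prevents $u_k$ from converging locally uniformly.'' But case $(i)$ only asserts convergence in $C^{2m-1}_{\loc}(\Omega\setminus S)$, i.e.\ away from $S$, and concentration of mass at the points of $S$ is a priori compatible with that; whether case $(i)$ can occur with $S\neq\emptyset$ is precisely the second open question at the end of the paper, so it cannot be dismissed in one line. The correct route is: first prove \eqref{supinf} (if $\sup_{\Omega_0}u_k\le C$ then $\int_{B_r(y)}|V_k|e^{2mu_k}dx\le Cr^{2m}$ for $y\in\Omega_0$, so $S\cap\Omega_0=\emptyset$, contradicting $\emptyset\neq S\subset\Omega_0$); then blow up at a maximum point $x_k\to x_0\in S$, use the limiting profile together with \eqref{Deltau-} and the Liouville-type results of \cite{mar2} to get $V_0(x_0)>0$; and only then invoke the last assertion of Proposition \ref{prop3} to conclude that case $(ii)$ occurs. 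You do have all these ingredients (you establish $V_0(x^{(i)})>0$ immediately afterwards), but as written the deduction of case $(ii)$ precedes them and rests on a non sequitur. A second, smaller omission: from $\int_{B_R}|\Delta\eta|\,dy\le CR^{2m-2}$ and the decomposition $\eta=v+p$ of \cite{mar1} you only conclude directly that the degree-$(\le 2m-2)$ polynomial $p$ is harmonic; to get $p\equiv const$ (hence $\eta=\eta_0$) you must also use that $p$ is bounded from above, which comes from $\eta\le\log 2$ for the first bubble and from the maximality of the selection function in the iterative step. This should be said explicitly.
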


\begin{proof} \emph{Step 1.} If $\sup_{\Omega_0}u_k\leq C$, then by \eqref{defS} we have $S=\emptyset$, contrary to the assumption we made after Proposition \ref{prop3}. Therefore we can assume that \eqref{supinf} holds.

\medskip

\noindent \emph{Step 2.}
Since $u_k$ is locally bounded in $\Omega\setminus S$ uniformly in $k$ if case $(i)$ of Theorem \ref{trm1} holds, and $u_k\to-\infty$ uniformly locally in $\Omega\setminus S$, one can find $x_k\in\Omega_0$ such that
$$u_k(x_k)=\sup_{\Omega_0}u_k\to \infty, \quad \text{as }k\to\infty.$$
Moreover up to a subsequence $x_k\to x_0\in S$. In particular $\dist(x_k,\de\Omega_0)\geq \delta>0$ for some $\delta>0$. 
Setting $\sigma_k=e^{-u_k(x_k)}$, we define
$$z_k(y)=u_k(x_k+\sigma_k y)+\log(\sigma_k)\leq 0\quad\text{in } B_{\delta/\sigma_k}(0).$$
We claim that up to a subsequence $z_k\to z_0$ in $C^{2m-1,\alpha}_{\loc}(\R{2m})$, where
\begin{equation}\label{z0}
(-\Delta)^m z_0=V_0(x_0)e^{2mz_0},\quad \int_{\R{2m}}e^{2mz_0}dx<\infty.
\end{equation}
This follows by elliptic estimates, using that $z_k\leq 0$, $z_k(0)=0$ and Proposition \ref{grad}.
With the same technique of the proof of Proposition 8 in \cite{mar3}, step 3, one proves that $V_0(x_0)>0$.
Since we have found a point $x_0\in S$ with $V_0(x_0)>0$, Proposition \ref{prop3} implies that we are in case $(ii)$ of Theorem \ref{trm1}.

\medskip

\noindent\emph{Step 3.}
Now we define $x_{1,k}:=x_k\to x_0=:x^{(1)}$. Also set $\mu_{1,k}$ and $\eta_{1,k}$ as in the statement of the proposition. Then, still following \cite{mar3}, Proposition 8, we infer that $\eta_{1,k}(x)\to \log\frac{2}{1+|x|^2}$ in $C^{2m-1,\alpha}_{\loc}(\R{2m})$. 

\medskip

\noindent\emph{Step 4.} We now proceed by induction, as follows. Assume that we have already found $L$ sequences $(x_{i,k})$ and $(\mu_{i,k})$, $1\leq i\leq L$, such that (a) and (b) holds, we either have that also $(c)$ holds, and we are done, or we construct a new sequence $x_{L+1,k}=x_k\to x_0\in S$, and $\sigma_k=\sigma_{L+1,k}:=e^{-u_k(x_k)}$
such that
$$\inf_{1\leq i\leq L}|x_k-x^{(i)}|e^{u_k(x_k)}=\max_{x\in\Omega_0}\inf_{1\leq i\leq L}|x-x^{(i)}|e^{u_k(x)}.$$
Then we define $z_k\to z_0$ as before, we prove that $V_0(x_0)>0$, so that we can define $\mu_{L+1,k}$ and $\eta_{L+1,k}$ as in the statement of the proposition and $\eta_{L+1,k}(x)\to\log\frac{2}{1+|x|^2}$ in $C^{2m-1,\alpha}_{\loc}(\R{2m})$. Moreover (a) holds with $L+1$ instead of $L$. Taking into account (a) and (b), we see that
$$\limsup_{k\to\infty}\int_{\Omega_0}V_ke^{2mu_k}dx\geq (L+1)\Lambda_1.$$
This, \eqref{Vk} and \eqref{area} imply that after a finite number of steps the procedure stops and (c) holds. The missing details are as in Step 1 of the proof of Theorem 1 in \cite{DR}.
\end{proof}

\begin{rmk} In general, as shown by X. Chen \cite{che}, it is possible that $L>I$, hence $x^{(i)}=x^{(j)}$ for some $i\neq j$. In this case we will stick to the notation $S=\{x^{(i)},\ldots,x^{(I)}\}$, i.e. $x^{(i)}\neq x^{(j)}$ for $i\neq j$, $1\le i,j\le I$.
\end{rmk}

\begin{prop}\label{propgrad2} For $2\leq \ell\leq 2m-1$ and $\Omega_0\subset\subset\Omega$ we have
\begin{equation}\label{deltau}
\inf_{1\leq i\leq L} |x-x_{i,k}|^{\ell} |\nabla^{\ell-2}\Delta u_k(x)|\leq C=C(\Omega_0), \quad \text{for }x\in\Omega_0.
\end{equation}
\end{prop}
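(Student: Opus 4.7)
My plan is to argue by contradiction via a rescaling around points that would violate the desired inequality. Suppose instead there are sequences $k_n\to\infty$ and $x_n\in\Omega_0$ with
$$M_n:=r_n^\ell\,\bigl|\nabla^{\ell-2}\Delta u_{k_n}(x_n)\bigr|\to\infty,\qquad r_n:=\inf_{1\le i\le L}|x_n-x_{i,k_n}|.$$
If (up to a subsequence) $r_n\ge c>0$, then $x_n$ lies at positive distance from $S$ and Proposition \ref{prop4} directly gives $|\nabla^{\ell-2}\Delta u_{k_n}(x_n)|\le C$, ruling this case out. So the substance of the proof is the case $r_n\to 0$, which I would attack by blowing up at scale $r_n$.

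In that case I would introduce
$$\tilde u_n(x):=u_{k_n}(x_n+r_n x)+\log r_n\quad\text{on } B_{1/2}(0),$$
which by the covariance of the equation solves $(-\Delta)^m\tilde u_n=\tilde V_n e^{2m\tilde u_n}$ with $\tilde V_n(x):=V_{k_n}(x_n+r_n x)$. The key observation is that every point $x_n+r_n x$ with $|x|\le 1/2$ lies at distance at least $r_n/2$ from each bubble center $x_{i,k_n}$, so Proposition \ref{exh}(c) applied on a slightly enlarged $\Omega_0'\supset\Omega_0$ yields
$$\tilde u_n(x)=\log r_n+u_{k_n}(x_n+r_n x)\le \log r_n+\log\frac{2C}{r_n}\le C'$$
uniformly on $B_{1/2}(0)$; consequently $\|\Delta^m\tilde u_n\|_{L^\infty(B_{1/2}(0))}\le C$. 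A change of variables (the Jacobian $r_n^{-2m}$ exactly cancels against the derivative weight $r_n^2$ and the power $r_n^{2m-2}$ from Proposition \ref{grad}) also gives $\|\Delta\tilde u_n\|_{L^1(B_{1/2}(0))}\le C$.

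To conclude I would feed these two bounds into the polyharmonic elliptic regularity of Lemma 20 in \cite{mar1}, applied to $\Delta^{m-1}(\Delta\tilde u_n)=\Delta^m\tilde u_n$, obtaining $\|\Delta\tilde u_n\|_{C^{2m-3}(B_{1/4}(0))}\le C$. Evaluating at the origin and unwinding the rescaling (which transforms $\nabla^{\ell-2}\Delta\tilde u_n(0)$ into $r_n^\ell\,\nabla^{\ell-2}\Delta u_{k_n}(x_n)$) yields $M_n\le C$, contradicting $M_n\to\infty$. The only delicate step is extracting the pointwise upper bound $\tilde u_n\le C$ from Proposition \ref{exh}(c): this relies crucially on $r_n$ being the distance to the \emph{nearest} blow-up point, so that the $1/r_n$ factor produced by (c) exactly balances the $\log r_n$ shift in $\tilde u_n$. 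After that observation everything is routine elliptic regularity and change of variables.
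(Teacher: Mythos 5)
Your proof is correct, but it follows a genuinely different route from the paper's. The paper argues directly and pointwise: it reuses the Green-function representation from the proof of Proposition \ref{grad} to bound $|\nabla^{\ell-2}\Delta u_k(x)|$ by $C\int_{B_{4\delta}(\xi)}e^{2mu_k(y)}|x-y|^{-\ell}\,dy+C$, and then estimates this potential by decomposing the domain of integration according to the nearest bubble $x_{i,k}$ and further into the regions $|y-x_{i,k}|\le|x-x_{i,k}|/2$, $|y-x|\le 2|x-x_{i,k}|$ and the complement, feeding in the pointwise decay $e^{2mu_k(y)}\le C\inf_i|y-x_{i,k}|^{-2m}$ from Proposition \ref{exh}(c). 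You instead argue by contradiction and rescale at the scale $r_n=\inf_i|x_n-x_{i,k_n}|$; the same estimate \eqref{stima} then gives $\tilde u_n\le C$ on $B_{1/2}(0)$ (hence $\|\Delta^m\tilde u_n\|_{L^\infty}\le C$), the conclusion of Proposition \ref{grad} with $\ell=2$ scales exactly to $\|\Delta\tilde u_n\|_{L^1(B_{1/2})}\le C$, and Lemma 20 of \cite{mar1} (used in the paper in precisely this form in Proposition \ref{prop4}) yields $\|\Delta\tilde u_n\|_{C^{2m-3}(B_{1/4})}\le C$, contradicting $M_n\to\infty$ after undoing the scaling; your treatment of the non-degenerate case $r_n\ge c>0$ via Proposition \ref{prop4} is also the right complement. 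The trade-off is that the paper's computation is self-contained and explicit (no compactness or contradiction needed, and it only uses the representation formula already set up), whereas your rescaling argument is shorter and more conceptual but leans once more on the interior elliptic estimate with mixed $L^\infty$/$L^1$ data; both rest on the same two inputs, namely \eqref{stima} and the $L^1$ control of $\Delta u_k$ on small balls.
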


\begin{proof} Let us consider a ball $B_\delta(\xi)$ as in the proof of Proposition \ref{grad}, so that we have
$$|\nabla^{\ell-2}\Delta u_k(x)|\le C\int_{B_{4\delta}(\xi)}\frac{e^{2mu_k(y)}}{|x-y|^\ell}dy+C$$
for $x\in B_{2\delta}(\xi)$ which we now fix.
Set for $1\leq i\leq L$
$$\Omega_{i,k}:=\Big\{y\in B_{2\delta}(\xi):\inf_{1\leq j\leq L} |y-x_{j,k}| =|y-x_{i,k}|\Big\},$$
and, assuming $x\neq x_{i,k}$ for $1\leq i\leq L$ (otherwise \eqref{deltau} is trivial), set
$$\Omega_{i,k}^{(1)}:=\Omega_{i,k}\cap B_{|x_{i,k}-x|/2}(x_{i,k}),\quad \Omega_{i,k}^{(2)}:=\Omega_{i,k}\backslash B_{|x_{i,k}-x|/2}(x_{i,k}).$$
Observing that for $y\in \Omega_{i,k}^{(1)}$ we have $\frac{1}{|x-y|}\leq \frac{2}{|x-x_{i,k}|}$ and using (c) from Proposition \ref{exh}, we infer
\begin{eqnarray*}
\int_{\Omega_{i,k}}\frac{e^{2mu_k}}{|x-y|^\ell}dy&\leq& \frac{C}{|x-x_{i,k}|^\ell}\int_{\Omega_{i,k}^{(1)}}e^{2mu_k(y)}dy\\
&&+C\int_{\Omega_{i,k}^{(2)}}\frac{dy}{|x-y|^\ell |y-x_{i,k}|^{2m}}.
\end{eqnarray*}
The first integral on the right-hand side is bounded by $\frac{C}{|x- x_{i,k}|^\ell}.$ As for the integral over $\Omega_{i,k}^{(2)}$, write $\Omega_{i,k}^{(2)}=\Omega_{i,k}^{(3)}\cup \Omega_{i,k}^{(4)}$, with
$$\Omega_{i,k}^{(3)}= \Omega_{i,k}^{(2)}\cap B_{2|x- x_{i,k}|}(x),\quad \Omega_{i,k}^{(4)}=\Omega_{i,k}^{(2)}\backslash B_{2|x- x_{i,k}|}(x).$$
We have
\begin{eqnarray*}
\int_{\Omega_{i,k}^{(3)}}\frac{dy}{|x-y|^\ell |y-x_{i,k}|^{2m}}&\leq& \frac{C}{|x-x_{i,k}|^{2m}}\int_{\Omega_{i,k}^{(3)}}\frac{dy}{|x-y|^\ell}\\
&\le&\frac{C}{|x-x_{i,k}|^{2m}}\int_0^{2|x-x_{i,k}|}r^{2m-\ell-1}dr\\
&\leq& \frac{C}{|x-x_{i,k}|^\ell}.
\end{eqnarray*}
Observing that
$$\frac{1}{C}|y-x_{i,k}|\leq |x-y|\leq C|y-x_{i,k}|\quad \textrm{on }\Omega_{i,k}^{(4)},$$
we estimate
\begin{eqnarray*}
\int_{\Omega_{i,k}^{(4)}}\frac{dy}{|x-y|^\ell |y-x_{i,k}|^{2m}}&\leq& C \int_{\Omega_{i,k}^{(4)}}\frac{dy}{|x-y|^{2m+\ell}}\\
&\leq& C\int_{2|x-x_{i,k}|}^\infty r^{-\ell-1}dr\\
&\leq& \frac{C}{|x-x_{i,k}|^{\ell}}.
\end{eqnarray*}
Putting these inequalities together yields
$$|\nabla^{\ell-2}\Delta u_k(x)|\leq \frac{C}{\inf_{1\leq i\leq L}|x-x_{i,k}|^\ell}+C.$$
This gives \eqref{deltau} for $x\in B_{2\delta}(\xi)\setminus S$ and for $\dist(x,S)\leq 1$. For $\dist(x,S)\ge 1$, \eqref{deltau} follows from Proposition \ref{prop4}. By a simple covering argument, we conclude.
\phantom{ } \end{proof}

Analogous to Proposition 4.1 in \cite{rob2} we have the following result, which is the key step in showing that the contributions given by \eqref{energia} for $1\le i\le L$ asymptotically exhaust the whole energy.

\begin{prop}\label{prop4.1}
Consider $x_0\in S$, $0<\delta<\frac{\dist(x_0,\de\Omega)}{4}$, such that $V_k(x)\ge V_k(x_0)/2>0$ for $x\in B_{4\delta}(x_0)$ and $k$ large enough. Up to relabelling assume that
$$\lim_{k\to\infty} x_{i,k}=x_0,\quad  \text{for }1\leq i\leq N,$$
for some positive integer $N\le L$, and set $x_k:=x_{1,k}$, $\mu_k:=\mu_{1,k}$. Assume that for a sequence $0\le \rho_k\to 0$ we have
\begin{equation}\label{4.4}
\inf_{1\le i\le N}|x-x_{i,k}|e^{u_k(x)}\le C,\quad \inf_{1\le i\le N}|x-x_{i,k}|^\ell|\nabla^{\ell-2}\Delta u_k(x)|\le C
\end{equation}
for $x\in B_{2\delta(x_k)}\setminus B_{\rho_k}(x_k)$ and $2\leq \ell\leq 2m-1$.
Let $r_k>0$ be such that $r:=\lim_{k\to\infty}r_k\in[0,\delta]$, $\lim_{k\to\infty}\frac{\mu_k}{r_k}=\lim_{k\to\infty}\frac{\rho_k}{r_k}=0$ and set
$$J:=\Big\{i\in\{2,\ldots,N\}:\limsup_{k\to\infty}\frac{|x_{i,k}-x_k|}{r_k}<\infty \Big\}.$$
Up to a subsequence, define $\tilde x_i:=\lim_{k\to\infty}\frac{x_{i,k}-x_k}{r_k}$, for $i\in J$.
Assume that $\tilde x_i\neq 0$ for $i\in J$ and let $\nu$ and $R$ be such that
\begin{equation}\label{4.9}
0<\nu<\frac{1}{10}\min\big\{ \{|\tilde x_i|:i\in J\}\cup \{|\tilde x_i-\tilde x_j|:i,j\in J, \tilde x_i\neq\tilde x_j\}   \big\},
\end{equation}
and
\begin{equation}\label{4.10}
3\max \{|\tilde x_i|: i\in J\}<R<\frac{\delta}{2r},
\end{equation}
where $\frac{\delta}{2r}:=\infty$ if $r=0$.
Then we have
\begin{equation}\label{stima3}
\lim_{k\to\infty}\int_{\big(B_{Rr_k}(x_k)\setminus \bigcup_{i\in J}\overline{B}_{\nu r_k}(x_{i,k})\big)\setminus\overline{B}_{3\rho_k}(x_k)}e^{2mu_k}dx=0,\quad \text{if }\mu_k/\rho_k\to 0,
\end{equation}
as $k\to \infty$, and
\begin{equation}\label{stima4}
\lim_{\tilde R\to\infty}\lim_{k\to\infty}\int_{\big(B_{Rr_k}(x_k)\setminus \bigcup_{i\in J}\overline{B}_{\nu r_k}(x_{i,k})\big)\setminus\overline{B}_{\tilde R\mu_k}(x_k)}e^{2mu_k}dx=0, \quad \text{if }\rho_k\leq C\mu_k.
\end{equation}
\end{prop}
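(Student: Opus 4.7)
The plan is to rescale around $x_k$ at scale $r_k$ and prove a bubble-tail pointwise estimate which, upon direct integration, gives the vanishing of the integral. Define $v_k(y) := u_k(x_k+r_ky)+\log r_k$, $W_k(y) := V_k(x_k+r_ky)$, $\tilde x_{i,k} := (x_{i,k}-x_k)/r_k$, and $\nu_k := \mu_k/r_k\to 0$. Then $v_k$ solves $(-\Delta)^m v_k=W_ke^{2mv_k}$, $\int e^{2mv_k}\,dy=\int e^{2mu_k}\,dx$, and \eqref{4.4} rescales to $\inf_{1\le i\le N}|y-\tilde x_{i,k}|e^{v_k(y)}\le C$ together with $\inf_i|y-\tilde x_{i,k}|^\ell|\nabla^{\ell-2}\Delta v_k(y)|\le C$ for $2\le\ell\le 2m-1$. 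The target region becomes $A_k := \bigl(B_R(0)\setminus\bigcup_{i\in J}\overline{B}_\nu(\tilde x_{i,k})\bigr)\setminus\overline{B}_{\sigma_k}(0)$, with $\sigma_k=3\rho_k/r_k$ in the first case and $\sigma_k=\tilde R\mu_k/r_k$ in the second.

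The heart of the proof is the bubble-tail bound
\begin{equation*}
e^{v_k(y)}\le C\,\frac{\nu_k}{|y|^2}\quad\text{on }A_k,
\end{equation*}
which I would prove by contradiction. Suppose $|y_k|^2e^{v_k(y_k)}/\nu_k\to\infty$ for some $y_k\in A_k$ (which we may take to be a maximizer of $|y|^2e^{v_k(y)}$ on $A_k$). Set $t_k := e^{-v_k(y_k)}$, so by the rescaled pointwise bound $t_k\ge c|y_k|$ and by the contradictory assumption $t_k\ll|y_k|^2/\nu_k$. Define $\tilde v_k(z) := v_k(y_k+t_kz)+\log t_k$; then $\tilde v_k(0)=0$, $\tilde v_k\le C$ on compacts (by maximality of $y_k$), and the second bound in the rescaled \eqref{4.4} yields uniform $C^{2m-1}_{\loc}$ control on $\tilde v_k$ on $\R{2m}\setminus\{-y_k/t_k\}$. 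Passing to a limit $\tilde v_\infty$, which solves $(-\Delta)^m\tilde v_\infty=\tilde W_\infty e^{2m\tilde v_\infty}$ with $\tilde W_\infty$ a positive constant and satisfies $\tilde v_\infty(0)=0$, $\Delta\tilde v_\infty$ bounded, and $\int e^{2m\tilde v_\infty}<\infty$, the classification of \cite{mar1,mar2} (the derivative bound rules out the Chang-Chen exotic solutions of \cite{CC}) forces $\tilde v_\infty$ to be a standard spherical profile of energy $\Lambda_1$. This gives a new blow-up point $x_k+r_ky_k$ with bubble scale $r_kt_k$: in case $\mu_k/\rho_k\to 0$ we have $r_kt_k\ge c\rho_k\gg\mu_k$, and in case $\rho_k\le C\mu_k$ we have $r_kt_k\ge c\tilde R\mu_k$ which is $\gg\mu_k$ for $\tilde R$ large. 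Since the new bubble also lies at distance $\ge \nu r_k/2$ from each $x_{i,k}$, $i\in J$ (and using $\mu_{i,k}/r_k\to 0$, which follows from (a) of Proposition~\ref{exh}), and cannot match $x_{j,k}$ for $j\notin\{1\}\cup J$ (whose limit is $\neq x_0$), we have produced a bubble outside the exhaustive list $\{x_{1,k},\ldots,x_{L,k}\}$ of Proposition~\ref{exh}, contradicting property (c).

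Granting the bubble-tail estimate, a direct integration yields
\begin{equation*}
\int_{A_k}e^{2mv_k}\,dy \le C\nu_k^{2m}\int_{A_k}|y|^{-4m}\,dy \le C\Bigl(\frac{\nu_k}{\sigma_k}\Bigr)^{2m},
\end{equation*}
equal to $C(\mu_k/\rho_k)^{2m}\to 0$ in the first case and $C/\tilde R^{2m}$ in the second, tending to $0$ as $\tilde R\to\infty$ (uniformly in $k$). The main obstacle is the second blow-up step: establishing that the rescaled limit $\tilde v_\infty$ is a genuine standard bubble carrying full energy $\Lambda_1$, which requires the derivative bounds of \eqref{4.4} to rule out the exotic solutions of \cite{CC}, and then verifying via the separation conditions \eqref{4.9}--\eqref{4.10} and the scale gap between $\mu_k$ and $\sigma_kr_k$ that this new bubble cannot be identified with any $x_{j,k}$ from the list provided by Proposition~\ref{exh}.
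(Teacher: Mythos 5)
Your overall architecture (rescale by $r_k$, prove a pointwise decay estimate on the annular region, integrate) matches the paper's, and your final integration step is fine granted the pointwise bound. The gap is in the proof of the central claim $e^{v_k(y)}\le C\nu_k/|y|^2$ (with your $\nu_k=\mu_k/r_k$), and it is not a fixable detail: the contradiction argument cannot get started. If $|y_k|^2e^{v_k(y_k)}/\nu_k\to\infty$, this does \emph{not} force concentration at $y_k$, because the threshold $\nu_k/|y_k|^2$ itself tends to zero; one can have $e^{v_k(y_k)}\to 0$, hence $t_k=e^{-v_k(y_k)}\to\infty$, while your estimate fails, and then the second rescaling is a zoom-out producing nothing. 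Worse, the contradiction you aim for is structurally unavailable: property (c) of Proposition \ref{exh} controls $\inf_i|y-\tilde x_{i,k}|e^{v_k(y)}$, which is \emph{first}-power decay in $|y|$ with no factor $\nu_k$; your candidate point automatically satisfies (c) (indeed you invoke (c) to bound $t_k$ from below by $c\min(|y_k|,\nu)$), so the failure of a \emph{second}-power estimate carrying the extra small factor $\nu_k$ produces no bubble outside the list. Note also that since $t_k\ge c\min(|y_k|,\nu)$, the first bubble at the origin sits at bounded rescaled distance $|y_k|/t_k$, so the limit of $\tilde v_k$ would at best live on $\R{2m}$ punctured at a finite point, where the classification of \cite{mar1}, \cite{mar2} does not apply.

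The deeper reason the proposal cannot work as written is that it never uses \eqref{energia}, i.e.\ that the bubble at $x_k$ is spherical and carries the \emph{full} energy $\Lambda_1$. This is precisely the input that makes the proposition true (for a Chang--Chen profile of energy $\alpha<\Lambda_1$ the conclusion fails), and it is where the paper's proof does its real work: setting $U_k(x)=\tilde u_k(s_kx)+\log s_k$ and representing $U_k(0)$ via the Navier Green function $H$ of $\Delta^m$ on $B_1$, the lower bound $(-1)^mH(y)\ge\frac{2}{\Lambda_1}\log\frac1{|y|}-C$ together with $\int_{B_{\tilde R\mu_k/(s_kr_k)}}(-\Delta)^mU_k\,dy\to\Lambda_1$ yields $I\ge(2+\theta_k(\tilde R))\log\frac{s_kr_k}{\mu_k}$, which beats $U_k(0)=\log\frac{s_kr_k}{\mu_k}+C$ by the crucial factor $2>1$ and forces $\inf_{\de B_1}U_k\le-(1+\theta)\log\frac{s_kr_k}{\mu_k}+C$. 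A Harnack-type oscillation estimate on circles (Step 2 of the paper, using the rescaled \eqref{4.4}) then converts this inf-bound into the sup-bound \eqref{4.27}, i.e.\ $e^{\tilde u_k(y)}\le C(\mu_k/r_k)^\alpha|y|^{-(1+\alpha)}$ for some possibly small $\alpha>0$ --- weaker than your claimed tail bound but sufficient for the integral. To repair your argument you would need to replace the second-blow-up step by an estimate of this kind (Green representation, or a Pohozaev/sup--inf argument) that quantitatively exploits the energy $\Lambda_1$ of the first bubble.
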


\begin{rmk}
For a better understanding of the above proposition one can first consider the simplified case when $N=L=1$ (only one blow-up sequence), $r_k=\delta$, $\rho_k=0$, $R=\frac{1}{4}$ and $J=\emptyset$. Then \eqref{stima4} reduces to
$$\lim_{\tilde R\to\infty}\lim_{k\to\infty}\int_{B_{\delta/4}(x_k)\setminus \overline B_{\tilde R\mu_k}(x_k)}e^{2mu_k}dx=0. $$
This and \eqref{energia} imply \eqref{quant} with $\alpha_1=\Lambda_1$, hence the proof of Theorem \ref{trm1} is complete in this special case. 

In the general case we point out that the estimates in \eqref{4.4} are stronger than \eqref{stima} and \eqref{deltau} in that the infimum is not taken over all $1\leq i\leq L$, and weaker in that they need not hold in $B_{\rho_k}(x_k)$.
\end{rmk}

\begin{proof} First observe that if $\rho_k\le C\mu_k$,  upon redefining $\rho_k$ larger, we see that \eqref{stima3} implies \eqref{stima4}, hence we shall assume that $\lim_{k\to\infty}\mu_k/\rho_k=0$.

\medskip

\noindent \emph{Step 1.} Set
$$\Omega_k:=\bigg(B_{3R}(0)\setminus \bigcup_{i\in J} \overline B_\frac{\nu}{2}(\tilde x_i)\bigg)\setminus \overline B_{\frac{\rho_k}{r_k}}(0).$$
Then, as in \cite{rob2}, we easily get that for $x\in\Omega_k$ and $k$ large enough
\begin{equation}\label{4.16}
\inf_{1\leq i\leq N}|x_k+r_kx-x_{i,k}|\ge C(\nu,R)r_k|x|,
\end{equation}
and
$$x_k+r_kx\in B_{2\delta}(x_k)\setminus \overline B_{\rho_k}(x_k).$$
Set $\tilde u_k(x):=u_k(x_k+r_k x)+\log r_k$ for $x\in B_{3R}(0)$, satisfying
$$(-\Delta)^m \tilde u_k=\tilde V_k e^{2m \tilde u_k}\quad \text{in } B_{3R}(0)$$
for $\tilde V_k(x):=V_k(x_k+r_kx)$. According to \eqref{4.4} we have
\begin{equation}\label{4.19}
|x|e^{\tilde u_k(x)}\leq C,\quad |x|^{2\ell}|\Delta^\ell\tilde u_k(x)|\leq C\quad \text{for }x\in \Omega_k, \;1\leq \ell\leq m-1.
\end{equation}

\medskip

\noindent\emph{Step 2.} There are constants $C=C(\nu,R)$, $\beta=\beta(\nu,R)>0$ such that
\begin{equation}\label{4.20}
\sup_{\substack{|x|=r \\ x\not\in\bigcup_{i\in J}\overline B_{\nu}(\tilde x_i)}}(\beta\tilde u_k(x)) \le \inf_{\substack{|x|=r \\ x\not\in\bigcup_{i\in J}\overline B_{\nu}(\tilde x_i)}} \tilde u_k(x) +(1-\beta)\log r+C,
\end{equation}
for all $r\in]3\rho_k/r_k,2R]$. This follows exactly as in step 4.2 of \cite{rob2},  using \eqref{4.19} and Harnack's inequality.

\medskip

\noindent\emph{Step 3.} We claim that there exists $\alpha>0$ such that
\begin{equation}\label{4.27}
\sup_{\substack{|x|=r \\ x\not\in\bigcup_{i\in J}\overline B_{\nu}(\tilde x_i)}}\tilde u_k(x)\le -(1+\alpha)\log r -\alpha\log\frac{r_k}{\mu_k}+C
\end{equation}
for all $r\in]3\rho_k/r_k,2R]$. In order to prove this claim, fix $s_k\in ]3\rho_k/r_k,2R]$ and set
$$U_k(x):=\tilde u_k(s_kx)+\log s_k\quad \text{for }x\in B_\frac{3R}{s_k}(0).$$
Assume that $0<s_k<8\nu$, so that
$$B_1(0)\cap \bigg(\bigcup_{i\in J} \overline B_{\frac{2\nu}{s_k}}(s_k^{-1}\tilde x_i)\bigg)=\emptyset,$$
and let $H$ be the Green's function of $\Delta^m$ on $B_1$ with Navier boundary condition, that is the only function satisfying
$$\Delta^m H=\delta_0 \text{ on }B_1,\quad \quad H=\Delta H=\cdots =\Delta^{m-1}H=0\text{ on }\de B_1.$$
Then we have
\begin{equation}\label{4.32}
U_k(0)=\int_{B_1} H(y)\Delta^m U_k(y)dy+\sum_{\ell=0}^{m-1}\int_{\de B_1}\frac{\de \Delta^{m-1-\ell} H(y)}{\de n}\Delta^\ell U_k(y)d\sigma(y).
\end{equation}
Using \eqref{4.9} and \eqref{4.10} we infer that $\de B_1\subset s_k^{-1}\Omega_k$. Moreover \eqref{4.19} yields
$$U_k(x)\leq C,\quad |\Delta^\ell U_k(x)|\leq C\quad \text{for }|x|=1,\;1\leq \ell\leq m-1.$$
This implies
$$ \bigg|\int_{\de B_1}\frac{\de \Delta^{m-1-\ell} H(y)}{\de n}\Delta^\ell U_k(y)d\sigma(y)\bigg|\le C,\quad \text{for }1\leq \ell\leq m-1,$$
and
$$\int_{\de B_1}\frac{\de \Delta^{m-1} H(y)}{\de n} U_k(y)d\sigma(y) \geq \inf_{\de B_1}U_k,$$
where we used the identity $\int_{\de B_1}\frac{\de \Delta^{m-1} H(y)}{\de n} d\sigma(y)=1$. This in turn can be checked by testing \eqref{4.32} with $U_k\equiv 1$. Then, also observing that $(-1)^mH\geq 0$ and $(-\Delta)^m U_k\geq 0$, \eqref{4.32} gives
\begin{equation}\label{4.32bis}
\begin{split}
U_k(0)&\ge \int_{B_1} (-1)^mH(y)(-\Delta)^m U_k(y)dy+\inf_{\de B_1}U_k-C\\
&\ge \int_{B_\frac{\tilde R\mu_k}{s_kr_k}} (-1)^mH(y)(-\Delta^m) U_k(y)dy+\inf_{\de B_1}U_k-C,
\end{split}
\end{equation}
for any $\tilde R>0$ and $k\geq k_0$ such that $B_\frac{\tilde R\mu_k}{s_kr_k}\subset B_\frac{1}{2}$.
We have that
\begin{equation}\label{stimaH}
(-1)^m H(y)\ge \frac{2}{\Lambda_1}\log\frac{1}{|y|}-C,
\end{equation}
which follows by elliptic estimates and the fact that $K(x):=\frac{2}{\Lambda_1}\log\frac{1}{|x|}$ satisfies $(-\Delta)^m K=\delta_0$ (see e.g. \cite[Proposition 22]{mar1}), hence $\Delta^m((-1)^m K-H)=0$. Plugging \eqref{stimaH} into \eqref{4.32bis} we can further estimate
\begin{equation}\label{4.32ter}
U_k(0)-\inf_{\de B_1}U_k+C\ge \int_{B_\frac{\tilde R\mu_k}{s_kr_k}} \bigg(\frac{2}{\Lambda_1}\log\frac{1}{|y|}-C\bigg)(-\Delta)^m U_k(y)dy=:I.
\end{equation}
Scaling back, recalling that $u_k(x_k)=-\log\mu_k + \frac{1}{2m}\log\frac{(2m-1)!}{V_0(x_0)}$, and performing the change of variable $y=\frac{\mu_k}{s_k r_k}z$, we obtain
\begin{equation*}
\begin{split}
I&= \int_{B_\frac{\tilde R\mu_k}{s_kr_k}} \bigg(\frac{2}{\Lambda_1}\log\frac{1}{|y|}-C\bigg) V_k(x_k+r_ks_k y)e^{2mU_k(y)}dy\\
&= \int_{B_{\tilde R}}\frac{2}{\Lambda_1}\bigg(\log\frac{1}{|z|}+\log\frac{s_kr_k}{\mu_k}-C\bigg)\frac{(2m-1)!V_k(x_k+\mu_kz)}{V_0(x_0)}e^{2m\eta_k}dz,
\end{split}
\end{equation*}
with $\eta_k=\eta_{1,k}$ is as in Proposition \ref{exh}, part $b$.
Then Proposition \ref{exh} implies for $k\geq k_0(\tilde R)$
$$I\geq (1+o(1))\frac{2}{\Lambda_1}\log\frac{s_kr_k}{\mu_k}\int_{B_{\tilde R}}(2m-1)!e^{2m\eta_0}dz,$$
with error $o(1)\to 0$ as $k\to\infty$. Then with \eqref{eta0} we get
$$I\geq (2+\theta_k(\tilde R))\log\frac{s_kr_k}{\mu_k}$$
for some function $\theta_k(\tilde R)$ with $\lim_{\tilde R\to\infty}\lim_{k\to\infty}\theta_k(\tilde R)=0$.
Going back to \eqref{4.32ter} and observing that $U_k(0)=\log\frac{r_ks_k}{\mu_k}+C$, we conclude
$$(1+\theta_k(\tilde R))\log\frac{s_kr_k}{\mu_k}+\inf_{\de B_1}U_k\leq C,$$
for $k\geq k_0(\tilde R)$ large enough.
Upon choosing $\tilde R$ large, we see that there exists $\theta>-1$ such that
$$(1+\theta)\log\frac{s_kr_k}{\mu_k}+\inf_{\de B_1}U_k\leq C$$
for all $k$ large enough.
Combining this with \eqref{4.20} we obtain \eqref{4.27} with $\alpha:=\frac{1+\theta}{\beta}>0$, at least under the assumption that $r<8\nu$. For $r\geq 8\nu$ \eqref{4.27} follows from the case $r=7\nu$ and \eqref{4.20}.

\medskip

\noindent\emph{Step 4.} We now complete the proof of \eqref{stima3}. For $y\in B_R(0)\setminus \bigcup_{i=1}^\ell \overline{B}_{\nu/2}(\tilde x_i)$ we get from \eqref{4.27} (upon taking $\nu$ smaller)
$$\tilde u_k(y)\leq -(1+\alpha)\log|y|-\alpha\log\frac{r_k}{\mu_k}+C.$$
Finally, scaling back to $u_k$ and observing that $\overline B_{\nu/2}(\tilde x_i)\subset \overline B_\nu\Big(\frac{x_{i,k}-x_k}{r_k}\Big)$ for $k$ large enough, one gets
\begin{equation*}
\begin{split}
&\int_{\big(B_{Rr_k}(x_k)\setminus \bigcup_{i\in J}\overline{B}_{\nu r_k}(x_{i,k})\big)\setminus \overline B_{3\rho_k}(x_k)}e^{2mu_k}dx\\
&\qquad \le \int_{\big(B_R\setminus\bigcup_{i\in J} \overline{B}_{\frac{\nu}{2}}(\tilde x_i)\big)\setminus \overline B_{\frac{3\rho_k}{r_k}}}  e^{2m\tilde u_k(y)}dy \\
&\qquad \le\int_{\R{2m}\setminus \overline B_{\frac{3\rho_k}{r_k}}}C\bigg(\frac{\mu_k}{r_k}\bigg)^{2m\alpha}\frac{1}{|y|^{2m(1+\alpha)}}dy\\
&\qquad \le C\bigg(\frac{\mu_k}{\rho_k}\bigg)^{2m\alpha}\to 0,\quad \text{as }k\to\infty.
\end{split}
\end{equation*}
\end{proof}

Finally we claim that for any $N>0$ the following proposition holds.

\begin{prop}\label{propHN} Given a ball $B_{4\delta}(x_0)\subset\R{2m}$, let $(u_k)\subset C^{2m}(B_{4\delta(x_0)})$ be a sequence of solutions to \eqref{eq0}, \eqref{Vk}, \eqref{area} with $\Omega=B_{4\delta}(x_0)$, $V_k\geq V_0(x_0)/2>0$. Let $x_{i,k}$ and $\mu_{i,k}$, $1\leq i\leq L$ be as in Proposition \ref{exh}, and assume that $1\le L\le N$, and $\lim_{k\to\infty}x_{i,k}=x_0$ for $1\le i\le L$. Then
$$\lim_{k\to\infty}\int_{B_\delta(x_0)}V_ke^{2mu_k}dx=L\Lambda_1.$$
\end{prop}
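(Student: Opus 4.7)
The plan is to prove Proposition \ref{propHN} by strong induction on $L$. The base case $L=1$ is the situation of the remark after Proposition \ref{prop4.1}: \eqref{stima4} with $N=1$, $J=\emptyset$, $r_k\equiv\delta/4$, $R=1$ gives $\int_{B_{\delta/4}(x_{1,k})\setminus B_{\tilde R\mu_{1,k}}(x_{1,k})}V_ke^{2mu_k}dx\to 0$ as $k\to\infty$ and then $\tilde R\to\infty$; \eqref{energia} supplies the $\Lambda_1$ contribution from the core $B_{\tilde R\mu_{1,k}}(x_{1,k})$; and case $(ii)$ of Theorem \ref{trm1} forces $u_k\to-\infty$ locally uniformly on $B_{4\delta}(x_0)\setminus\{x_0\}$, killing the remaining annulus $B_\delta(x_0)\setminus B_{\delta/4}(x_{1,k})$.

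For the inductive step $L\ge 2$, I would introduce the splitting scale
$$r_k:=\tfrac12\max_{2\le i\le L}|x_{i,k}-x_{1,k}|,$$
which tends to $0$, and, along a subsequence, the rescaled centres $\tilde x_i:=\lim_{k\to\infty}(x_{i,k}-x_{1,k})/r_k\in\overline B_1(0)$ exist, with at least one of modulus $1/2$. Set $J_0:=\{i\ge 2:\tilde x_i\ne 0\}$ and $J_1:=\{1\}\cup\{i\ge 2:\tilde x_i=0\}$; both have cardinality strictly less than $L$. Pick $\rho_k$ satisfying $\max_{i\in J_1\setminus\{1\}}|x_{i,k}-x_{1,k}|\ll\rho_k\ll r_k$ and $\mu_{1,k}/\rho_k\to 0$ (possible by Proposition \ref{exh}(a)), and let $J_0'\subset J_0$ consist of one representative per distinct value of $\tilde x_i$. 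Two applications of \eqref{stima3} then cover the complement of the bubble regions: the first with $(r_k,\rho_k,J_0')$, $R\ge 12$, and $\nu>0$ small enough for \eqref{4.9}--\eqref{4.10} controls the middle annulus inside $B_{Rr_k}(x_{1,k})$; the second with $r_k^{(2)}\equiv\delta/4$, $\rho_k^{(2)}:=Rr_k/3$ and $N$ reduced to $1$ (so $J=\emptyset$ vacuously) controls the outer annulus $B_{\delta/4}(x_{1,k})\setminus B_{Rr_k}(x_{1,k})$. The single-index version of hypothesis \eqref{4.4} required by the second application holds on $B_{\delta/2}(x_{1,k})\setminus B_{Rr_k/3}(x_{1,k})$ because $R\ge 12$ forces $Rr_k/3\ge 2\max_j|x_{j,k}-x_{1,k}|$, hence $|x-x_{1,k}|\le 2|x-x_{j,k}|$ for every $j$, reducing the infimum estimates of Proposition \ref{exh}(c) and Proposition \ref{propgrad2} to the single index $i=1$. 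Combined with $\int_{B_\delta(x_0)\setminus B_{\delta/4}(x_{1,k})}V_ke^{2mu_k}dx\to 0$ from local uniform convergence, these estimates yield
$$\int_{B_\delta(x_0)}V_ke^{2mu_k}dx = \int_{B_{3\rho_k}(x_{1,k})}V_ke^{2mu_k}dx+\sum_{i\in J_0'}\int_{B_{\nu r_k}(x_{i,k})}V_ke^{2mu_k}dx+o(1).$$

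To finish, each remaining piece reduces to a smaller instance of Proposition \ref{propHN} via conformal rescaling. Setting $\tilde u_k(y):=u_k(x_{1,k}+3\rho_k y)+\log(3\rho_k)$ preserves $Ve^{2mu}dx$, produces a solution of the same equation on a ball of large radius with potential $\tilde V_k\to V_0(x_0)$ uniformly on compacta, and the only blow-up sequences surviving inside $B_4(0)$ are the rescaled images of $J_1$ (those in $J_0$ escape to infinity because $|x_{i,k}-x_{1,k}|/\rho_k\to\infty$), all converging to $0$ by the choice of $\rho_k$. The induction hypothesis therefore gives $\int_{B_1(0)}\tilde V_ke^{2m\tilde u_k}dy\to|J_1|\Lambda_1$. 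An analogous rescaling around each $x_{i,k}$, $i\in J_0'$, by $\nu r_k/4$ transforms $B_{\nu r_k}(x_{i,k})$ into an instance of Proposition \ref{propHN} with exactly $L_i:=|\{j:\tilde x_j=\tilde x_i\}|\le|J_0|<L$ bubbles all collapsing to $0$, contributing $L_i\Lambda_1$. Summing yields $|J_1|\Lambda_1+\sum_{i\in J_0'}L_i\Lambda_1=(|J_1|+|J_0|)\Lambda_1=L\Lambda_1$, closing the induction. The main technical obstacle is the delicate choice of $\rho_k$ and $\nu$ guaranteeing that in every rescaled subproblem the blow-up set is exactly $\{0\}$, so that the induction hypothesis actually applies; positivity $\tilde V_k\ge V_0(x_0)/2$ on the rescaled balls follows directly from continuity of $V_0$ and the assumption on $V_k$ in the statement of Proposition \ref{propHN}.
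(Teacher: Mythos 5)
Your proposal is essentially the paper's proof: Proposition \ref{propHN} is established by exactly this induction on the number of bubbles, following Robert's Proposition $(H_N)$ in \cite{rob2}, with Proposition \ref{prop4.1} killing the neck regions, \eqref{energia} giving the core contributions, and rescaling to invoke the induction hypothesis on sub-clusters. The only details to tidy are that with $r_k=\tfrac12\max_i|x_{i,k}-x_{1,k}|$ the rescaled centres lie in $\overline B_2(0)$ rather than $\overline B_1(0)$, and that before applying Proposition \ref{prop4.1} with $J=J_0'$ you must first drop the indices with $\tilde x_i=0$ from the bubble family (so that the hypothesis $\tilde x_i\neq 0$ for $i\in J$ holds) while keeping all of $J_0$, and verify \eqref{4.4} for the reduced infimum on $B_{2\delta}(x_{1,k})\setminus B_{\rho_k}(x_{1,k})$ --- the same triangle-inequality reduction you already carry out for the second, outer application.
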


The proof of Proposition \ref{propHN} follows from Proposition \ref{prop4.1} and \eqref{energia} by induction on $N$ as in \cite{rob2}, Proposition $(H_N)$, with only minor and straightforward modifications.

\medskip

\noindent\emph{Proof of Theorem \ref{trm1}.} Fix $\Omega_0\subset\subset\Omega$ open with $S\subset \Omega_0$ and choose $\delta>0$ such that $B_{4\delta}(x^{(i)})\subset\Omega_0$ for $1\leq i\leq I$ and and $B_{4\delta}(x^{(i)})\cap B_{4\delta}(x^{(j)})=\emptyset$ for $1\le i\ne j\le I$ (remember that $x^{(i)}\neq x^{(j)}$ for $1\le i\ne j\le I$) and such that $V_k\ge V_k(x^{(i)})/2>0$ on $B_{4\delta}(x^{(i)})$ for $k$ large enough and $1\le i\le I$. We fix $i\in\{1,\ldots, I\}$  and apply Proposition \ref{propHN} to the function $u_k$ restricted to $B_\delta(x^{(i)})$ together with the $N=L_i\ge 1$ blow-up sequences converging to $x^{(i)}$, hence getting
$$\lim_{k\to\infty}\int_{B_\delta(x^{(i)})}V_ke^{2mu_k}dx= L_i\Lambda_1.$$
Moreover, since $u_k\to-\infty$ uniformly locally in $\Omega\setminus S$, it follows that
$$\lim_{k\to\infty}\int_{\Omega_0\setminus\bigcup_{i=1}^I B_\delta(x^{(i)})}V_ke^{2mu_k}dx=0,$$
whence \eqref{quant} and \eqref{quant2} follow at once.
\hfill$\square$

\section{Proof of Theorem \ref{trm2}}\label{sec2}

Here the Harnack-type estimates of \cite{rob2} are replaced by a technique of \cite{DR}, reminiscent of the Pohozaev inequality. For this it is crucial to have the gradient estimates of Propositions \ref{prop4bis} and \ref{gradbis} below, which correspond to (and in fact are stronger than) Propositions \ref{prop4} and \ref{grad} of the previous section, and which also work in the case $m=1$.
 
\begin{prop}\label{prop3bis} Let $(u_k)$ be a sequence of solutions to \eqref{eq0}, \eqref{area} and \eqref{Vkbis} satisfying \eqref{nablau2} for some ball $B_\rho(\xi)\subset\Omega$, and let $S$ be as in \eqref{defS}.
Then $S$ is finite (possibly empty) and one of the following is true:
\begin{itemize}
\item[(i)] $u_k\to u_0$ in $C^{2m-1}_{\loc}(\Omega\backslash S)$ for some $u_0\in C^{2m}(\Omega\backslash S)$;
\item[(ii)] $u_k\to -\infty$ locally uniformly in $\Omega \backslash S$.
\end{itemize}
If $S\neq \emptyset$ and $V_0(x)>0$ for some $x\in S$, then case (ii) occurs.
\end{prop}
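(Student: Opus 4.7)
I would follow the same scheme as in Proposition \ref{prop3}, replacing the bound on $\Delta u_k$ by the bound on $\nabla u_k$ at the relevant step. The starting point is again Theorem 1 of \cite{mar3}: up to a subsequence, either $(a)$ $u_k\to u_0$ in $C^{2m-1}_{\loc}(\Omega\setminus S)$, or $(b)$ there exist $\beta_k\to\infty$ and $\varphi\in C^\infty(\Omega\setminus S)$ with $\Delta^m\varphi\equiv 0$, $\varphi\leq 0$, $\varphi\not\equiv 0$, such that $u_k/\beta_k\to\varphi$ in $C^{2m-1}_{\loc}(\Omega\setminus(S\cup\Gamma))$, where $\Gamma=\{\varphi=0\}$ has Hausdorff dimension at most $2m-1$. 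Case $(a)$ gives case $(i)$ immediately, so the only thing to prove is that in case $(b)$ the exceptional set $\Gamma$ is empty.

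The key step is to show $\nabla\varphi\equiv 0$ on $\Omega\setminus S$. Here I use the hypothesis \eqref{nablau2}. Since $\Gamma$ is closed with $\mathcal{H}^{2m-1}$ finite it has Lebesgue measure zero, so $B_\rho(\xi)\setminus (S\cup\Gamma)$ contains a nonempty open ball $B_{\rho'}(\xi')$. If $\nabla\varphi\not\equiv 0$ in $\Omega\setminus S$, then by analyticity of $\varphi$ (as a polyharmonic function) we may further shrink $B_{\rho'}(\xi')$ so that $|\nabla\varphi|\geq c>0$ on it. The $C^{2m-1}_{\loc}$ convergence $u_k/\beta_k\to\varphi$ on $\Omega\setminus(S\cup\Gamma)$ is in particular uniform convergence of the first derivatives on the compact set $\overline{B_{\rho'}(\xi')}$, and therefore
\[
\int_{B_\rho(\xi)}|\nabla u_k|\,dx \;\geq\; \int_{B_{\rho'}(\xi')}|\nabla u_k|\,dx \;=\; \beta_k\int_{B_{\rho'}(\xi')}|\nabla \varphi|\,dx+o(\beta_k)\;\longrightarrow\;+\infty,
\]
contradicting \eqref{nablau2}. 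Hence $\nabla\varphi\equiv 0$ on a nonempty open subset of $\Omega\setminus S$, and by analyticity $\nabla\varphi\equiv 0$ on the whole of $\Omega\setminus S$.

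Since $\Omega$ is connected and $S$ finite, $\Omega\setminus S$ is connected (in dimension $2m\geq 2$ removing finitely many points does not disconnect), so $\varphi$ is a constant. The conditions $\varphi\leq 0$ and $\varphi\not\equiv 0$ then force $\varphi$ to be a strictly negative constant, hence $\Gamma=\{\varphi=0\}=\emptyset$ and $u_k\to-\infty$ locally uniformly on $\Omega\setminus S$, i.e.\ case $(ii)$. The final assertion that $V_0(x)>0$ at some blow-up point forces case $(ii)$ is again inherited from Theorem 1 of \cite{mar3}, exactly as in Proposition \ref{prop3}. The only delicate point is the one highlighted above, namely making sure that one can select a ball inside $B_\rho(\xi)$ avoiding $S\cup\Gamma$ where the $C^{2m-1}$ convergence is available; this is made possible by the measure-zero character of $\Gamma$ and the analyticity of $\varphi$, and is what replaces the use of $\Delta\varphi$ in the proof of Proposition \ref{prop3}.
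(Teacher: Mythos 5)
Your proof is correct and follows essentially the same route as the paper: invoke Theorem 1 of \cite{mar3} for the dichotomy, show $\nabla\varphi\equiv 0$ by contradiction using the analyticity of the polyharmonic function $\varphi$ together with \eqref{ubeta} and \eqref{nablau2}, and conclude $\varphi\equiv const<0$, hence $\Gamma=\emptyset$. Your extra care in choosing a ball avoiding $S\cup\Gamma$ (where the $C^{2m-1}$ convergence actually holds) is a welcome refinement of a point the paper's write-up passes over silently, but it does not change the argument.
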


\begin{proof}
The proof is analogous to the proof of Proposition \ref{prop3}. Following that proof and its notation, it is enough to show that if case $(b)$ occurs, then $\Gamma=\emptyset$. In order to show this, observe that $\nabla\varphi\equiv 0$ in $\Omega\backslash S$. Otherwise, since $\nabla \varphi$ is analytic, we would have
$$\int_{B_\rho(\xi)}|\nabla\varphi|dx>0,$$
where $B_\rho(\xi)\subset\Omega$ is as in \eqref{nablau2}. Then \eqref{ubeta} would imply
$$\lim_{k\to\infty}\int_{B_\rho(\xi)}|\nabla u_k|dx=\infty,$$
contradicting \eqref{nablau2}. Therefore $\varphi\equiv const$ and \eqref{phi} implies that $\varphi<0$ in $\Omega\backslash S$, i.e. $\Gamma=\emptyset$, as claimed.
\end{proof}

This completes the proof of the first part of Theorem \ref{trm2} and, as we did in the last section, we shall now assume that $(u_k)$ satisfies all the hypothesis of Theorem \ref{trm2}, including \eqref{nablau3}. As before, if $S=\emptyset$ the proof of Theorem \ref{trm2} is complete, hence we shall also assume that $S\neq \emptyset$ and we shall prove that we are in case $(ii)$ of the theorem.

\begin{prop}\label{prop4bis}
For every open set $\Omega_0\subset\subset \Omega\backslash S$ there is a constant $C=C(\Omega_0)$ such that
\begin{equation}\label{stimebis}
\|u_k-\bar u_k\|_{C^{2m-1}(\Omega_0)}\leq C,
\end{equation}
where $\bar u_k:=\intm_{\Omega_0}u_kdx$.
\end{prop}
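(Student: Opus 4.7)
The plan is to combine an $L^\infty$-bound on the right-hand side $V_ke^{2mu_k}$, provided by case $(ii)$ of Proposition~\ref{prop3bis}, with the global $L^1$-bound on $\nabla u_k$ from \eqref{nablau3}, and then bootstrap through standard interior $L^p$-estimates for the polyharmonic operator $(-\Delta)^m$.

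First I would reduce to case $(ii)$ of Proposition~\ref{prop3bis}: in case $(i)$ the sequence $u_k$ is already uniformly bounded in $C^{2m-1}_{\loc}(\Omega\setminus S)$, so $\bar u_k$ is bounded and \eqref{stimebis} is immediate. In case $(ii)$, I fix smooth open sets $\Omega_0\subset\subset\Omega_1\subset\subset\Omega_2\subset\subset\Omega\setminus S$. Since $u_k\to-\infty$ locally uniformly on $\Omega\setminus S$, eventually $u_k\le 0$ on $\Omega_2$, whence $\|V_ke^{2mu_k}\|_{L^\infty(\Omega_2)}\le C$. By the Poincar\'e-Sobolev inequality on the Lipschitz set $\Omega_2$ together with \eqref{nablau3}, I then get, with $q:=2m/(2m-1)$,
$$\|u_k-(u_k)_{\Omega_2}\|_{L^q(\Omega_2)}\le C\|\nabla u_k\|_{L^1(\Omega_2)}\le C.$$

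Setting $v_k:=u_k-(u_k)_{\Omega_2}$, one has $(-\Delta)^m v_k = V_k e^{2mu_k}$, so standard interior $L^q$-estimates for $(-\Delta)^m$ on the pair $\Omega_1\subset\subset\Omega_2$ yield $\|v_k\|_{W^{2m,q}(\Omega_1)}\le C$. Since $2mq=4m^2/(2m-1)>2m=n$, Morrey's embedding gives $\|v_k\|_{L^\infty(\Omega_1)}\le C$. Now both $v_k$ and the source $V_ke^{2mu_k}$ are bounded in $L^\infty(\Omega_1)$, so applying the interior $L^p$-estimate once more with any $p>2m$ on $\Omega_0\subset\subset\Omega_1$ and then Morrey's embedding delivers $\|v_k\|_{C^{2m-1,\alpha}(\Omega_0)}\le C$. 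Finally, replacing the constant $(u_k)_{\Omega_2}$ by $\bar u_k$ only shifts $v_k$ by the constant $\bar u_k-(u_k)_{\Omega_2}=(v_k)_{\Omega_0}$, which is bounded in absolute value by the $L^\infty$-bound on $v_k$ just proved; this affects only the $C^0$-part of the norm, and \eqref{stimebis} follows.

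The main subtlety to watch is the sharpness of the initial Sobolev-Poincar\'e step: \eqref{nablau3} only gives $\nabla u_k$ in the scaling-critical space $L^1$, so one lands in $L^q$ with $q$ just above $1$. The crucial point, making this work so cleanly, is that the order $2m$ of the operator coincides with the dimension $n=2m$, which is exactly what makes one round of $W^{2m,q}$-regularity produce $L^\infty$ control; after that the bootstrap closes in one further step.
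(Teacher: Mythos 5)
Your proof is correct, but it follows a genuinely different route from the paper's. The paper splits $u_k=w_k+h_k$ on a slightly larger smooth set $\tilde\Omega_0$, with $w_k$ solving the Navier problem with the bounded right-hand side $\Delta^m u_k$ (hence $\|w_k\|_{C^{2m-1}(\tilde\Omega_0)}\le C$) and $h_k$ polyharmonic; the global bound \eqref{nablau3} then controls $\|\nabla h_k\|_{L^1(\tilde\Omega_0)}$, and interior estimates for polyharmonic functions with $L^1$ data upgrade this to bounds on $\nabla h_k$ in every $C^\ell$ norm, giving control of $\nabla u_k$ in $C^{2m-2}(\Omega_0)$ directly. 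You instead keep $u_k$ intact, use the Sobolev--Poincar\'e inequality $\|u_k-(u_k)_{\Omega_2}\|_{L^{2m/(2m-1)}}\le C\|\nabla u_k\|_{L^1}$ to convert \eqref{nablau3} into an $L^q$ bound with $q>1$, and then run a two-step Calder\'on--Zygmund bootstrap; the observation that the exponent $q=1^*>1$ is exactly what is needed to enter the range of validity of the interior $L^p$ estimates, and that $W^{2m,q}\hookrightarrow L^\infty$ because the order of the operator equals the dimension, is the right one and closes the argument. The two proofs consume the same inputs (boundedness of $V_ke^{2mu_k}$ and the $L^1$ gradient bound); the paper's decomposition has the advantage of being the same Green-representation machinery reused immediately afterwards in Proposition \ref{gradbis}, while yours is more self-contained, relying only on standard $L^p$ theory and Sobolev embeddings rather than on estimates for polyharmonic functions with merely $L^1$ data. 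Two minor points to make explicit: choose $\Omega_2$ connected (possible since $\Omega\setminus S$ is connected in dimension $2m\ge 2$) so that the Poincar\'e step applies, and note that the uniform upper bound on $u_k$ over $\overline{\Omega_2}$, hence on the source term, is available in case $(ii)$ of Proposition \ref{prop3bis} as you say (and trivially in case $(i)$).
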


\begin{proof}
If case $(i)$ of Proposition \ref{prop3bis} occurs the proof is trivial, hence we shall assume that we are in case $(ii)$.
Consider an open set $\tilde\Omega_0\subset\subset \Omega\setminus S$ with smooth boundary and with $\Omega_0\subset\subset\tilde \Omega_0$. 
Write $u_k=w_k+h_k$ in $\tilde \Omega_0$, with $\Delta^m h_k=0$ and $w_k=\Delta w_k=\ldots=\Delta^{m-1}w_k=0$ on $\partial \tilde \Omega_0$. Since
$$|\Delta^m w_k|=|\Delta^m u_k|\le C=C(\tilde\Omega_0)\quad \text{on }\tilde\Omega_0,$$
by elliptic estimates we have
$$\|w_k\|_{C^{2m-1}(\tilde \Omega_0)}\leq C.$$
This and \eqref{nablau3} give $\|\nabla h_k\|_{L^1(\tilde \Omega_0)}\leq C$, hence, since $\Delta^m(\nabla h_k)=0$, by elliptic estimates we infer
$$\|\nabla h_k\|_{C^\ell(\Omega_0)}\le C=C(\ell,\Omega_0,\tilde\Omega_0)$$
for every $\ell\ge 0$, see e.g. Proposition 4 in \cite{mar1}.
Therefore
$$\|\nabla u_k\|_{C^{2m-2}(\Omega_0)}\le C=C(\Omega_0,\tilde\Omega_0),$$
and \eqref{stimebis} follows at once.
\end{proof}

\begin{prop}\label{gradbis} For every open set $\Omega_0\subset\subset \Omega$ there is a constant $C$ independent of $k$ such that 
\begin{equation}\label{grad2bis}
\int_{B_r(x_0)}|\nabla^{\ell} u_k|dx\leq C r^{2m-\ell},
\end{equation}
for $1\leq \ell \leq 2m-1$ and for every ball $B_r(x_0)\subset \Omega_0$.
\end{prop}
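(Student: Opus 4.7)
The plan is to mimic the proof of Proposition \ref{grad}, but work directly with $u_k$ via the Green's function for $\Delta^m$ on a small ball with Navier boundary conditions, using the global hypothesis \eqref{nablau3} to handle the resulting polyharmonic remainder.

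First, I would fix $\delta$ exactly as in the proof of Proposition \ref{grad}, so that by a covering argument it suffices to establish \eqref{grad2bis} for $0<r\le\delta$. Given such $B_r(x_0)\subset\Omega_0$, I would pick $B_{4\delta}(\xi)\subset\Omega$ with $B_r(x_0)\subset B_{2\delta}(\xi)$ and $\dist(\de B_{4\delta}(\xi),S)\ge 2\delta$, and decompose $u_k=v_k+h_k$ on $B_{4\delta}(\xi)$, where
$$\Delta^m v_k=V_ke^{2mu_k}\text{ in }B_{4\delta}(\xi),\quad v_k=\Delta v_k=\cdots=\Delta^{m-1}v_k=0\text{ on }\de B_{4\delta}(\xi),$$
and $h_k:=u_k-v_k$ is polyharmonic on $B_{4\delta}(\xi)$. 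The point of performing this splitting (rather than writing a single Green's identity for $u_k$ as in Proposition \ref{grad}) is that it annihilates all boundary terms; in particular, the boundary integral of $u_k$ itself, which would otherwise be unbounded in $k$, since in case $(ii)$ we have $u_k\to-\infty$ on $\de B_{4\delta}(\xi)$.

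For the ``particular solution'' $v_k$, the Green's representation together with the standard pointwise bound $|\nabla^\ell G_x(y)|\le C|x-y|^{-\ell}$ on the Navier Green's function of $\Delta^m$ (see \cite{DAS}) gives, for $1\le\ell\le 2m-1$ and $x\in B_{2\delta}(\xi)$,
$$|\nabla^\ell v_k(x)|\le C\int_{B_{4\delta}(\xi)}\frac{e^{2mu_k(y)}}{|x-y|^\ell}dy.$$
Integrating over $B_r(x_0)$ and swapping the order of integration by Fubini, exactly as in Proposition \ref{grad}, yields $\int_{B_r(x_0)}|\nabla^\ell v_k|dx\le Cr^{2m-\ell}$, the only nontrivial input being $\int_{B_r(x_0)}|x-y|^{-\ell}dx\le Cr^{2m-\ell}$ (valid since $\ell<2m$) together with \eqref{area}. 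Running the same computation on the full ball (i.e.\ with $r=4\delta$) produces $\|\nabla v_k\|_{L^1(B_{4\delta}(\xi))}\le C$ as a byproduct.

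For the polyharmonic remainder $h_k$, I would combine the last bound with the global hypothesis \eqref{nablau3} to get $\|\nabla h_k\|_{L^1(B_{4\delta}(\xi))}\le C$. Since $\nabla h_k$ is componentwise polyharmonic, interior estimates for polyharmonic functions (as used in the proof of Proposition \ref{prop4bis}, via Proposition 4 of \cite{mar1}) yield $\|\nabla^\ell h_k\|_{L^\infty(B_{3\delta}(\xi))}\le C$ for every $\ell\ge 1$, whence $\int_{B_r(x_0)}|\nabla^\ell h_k|dx\le Cr^{2m}\le Cr^{2m-\ell}$ since $r\le\delta$. Summing the $v_k$- and $h_k$-contributions gives \eqref{grad2bis}. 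The only technical ingredient is the pointwise gradient estimate on the Navier Green's function of $\Delta^m$; every other step is a direct adaptation of arguments already present in Proposition \ref{grad} or in the polyharmonic part of Proposition \ref{prop4bis}, so no serious obstacle is expected.
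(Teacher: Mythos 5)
Your argument is correct, and the singular part of it (Green's representation for the Navier problem on $B_{4\delta}(\xi)$, the kernel bound $|\nabla^\ell G_x(y)|\le C|x-y|^{-\ell}$ from \cite{DAS}, and the Fubini computation against \eqref{area}) is identical to the paper's. Where you diverge is in the treatment of the non-singular remainder. The paper writes a single representation formula \eqref{poissonbis} for $u_k-\bar u_k$, with $\bar u_k$ the average over $B_{4\delta}(\xi)$, and controls the resulting boundary integrals by the $C^{2m-1}$ bound \eqref{stimebis} of Proposition \ref{prop4bis}, which is applicable because $\de B_{4\delta}(\xi)$ is kept at distance $\ge 2\delta$ from $S$; subtracting the constant $\bar u_k$ is what neutralizes the divergence $u_k\to-\infty$ that you correctly identify as the obstruction to using $u_k$ directly. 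You instead split $u_k=v_k+h_k$ with $v_k$ the Navier solution with zero boundary data, which kills all boundary terms at once, and then control the polyharmonic part $h_k$ by combining $\|\nabla v_k\|_{L^1(B_{4\delta}(\xi))}\le C$ with the global hypothesis \eqref{nablau3} and interior polyharmonic estimates (Proposition 4 in \cite{mar1}). This is exactly the mechanism of the paper's proof of Proposition \ref{prop4bis}, so in effect you have merged Propositions \ref{prop4bis} and \ref{gradbis} into one self-contained argument; a small bonus of your route is that the decomposition lives on a ball that is allowed to meet $S$, so you never need to invoke the estimate away from the blow-up set. Both proofs rest on the same two inputs (\eqref{nablau3} and the Dall'Acqua--Sweers kernel bounds), and yours is complete as written.
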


\begin{proof} Going back to the proof of Proposition \ref{grad}, we only need to replace \eqref{poisson} by
\begin{equation}\label{poissonbis}
\begin{split}
u_k(x)-\bar u_k=&\int_{B_{4\delta}(\xi)} G_x(y)\Delta^{m}u_k(y)dy\\
&+\sum_{j=0}^{m-1}\int_{\de B_{4\delta}(\xi)}\frac{\de}{\de\nu}(\Delta^{m-j-1}G_x) \Delta^j (u_k-\bar u_k)d\sigma,
\end{split}
\end{equation}
where now
$$\Delta^{m}G_x=\delta_x \text{ in }B_{4\delta}(\xi),\quad G_x =\Delta G_x=\ldots=\Delta^{m-1}G_x=0\text{ on }\de B_{4\delta}(\xi),$$ 
and
$$\bar u_k:=\Intm_{B_{4\delta}(\xi)}u_kdx.$$
Differentiating and using $|\nabla^{\ell}G_x(y)|\leq \frac{C}{|x-y|^\ell}$ (see e.g. \cite{DAS}) and \eqref{stimebis} (with $\Omega_0=B_{4\delta}(\xi)$) on $\de B_{4\delta}(\xi)$, we infer for $x\in B_{2\delta}(\xi)$
\begin{equation*}
|\nabla^{\ell} u_k(x)|\leq C\int_{B_{4\delta}(\xi)}\frac{e^{2mu_k(y)}}{|x-y|^\ell}dy+C.
\end{equation*}
Integrating on $B_r(x_0)\subset B_{2\delta}(\xi)$ and using Fubini's theorem as before, we finally get
\begin{equation*}
\int_{B_r(x_0)}|\nabla^{\ell} u_k(x)|dx\leq C\int_{B_r(x_0)}\int_{B_{4\delta}(\xi)}\frac{e^{2mu_k(y)}}{|x-y|^\ell}dydx +Cr^{2m}\le Cr^{2m-\ell}.
\end{equation*}
\end{proof}

Proposition \ref{exh} also holds with the same proof. Proposition \ref{propgrad2} has the following analogue, which can be proved as above. Notice that at this point we are not yet excluding that $L>I$.

\begin{prop}\label{propgrad2bis} For $1\leq \ell\leq 2m-2$ and $\Omega_0\subset\subset\Omega$ we have
\begin{equation}\label{nablaubis}
\inf_{1\leq i\leq L}|x-x_{i,k}|^\ell |\nabla^\ell u_k(x)|\leq C=C(\Omega_0), \quad \text{for }x\in\Omega_0.
\end{equation}
\end{prop}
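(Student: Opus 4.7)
The plan is to transport the proof of Proposition \ref{propgrad2} almost verbatim, swapping out the representation formula \eqref{poisson} (Green function of $\Delta^{m-1}$ together with the $C^{2m-3}$ boundary bound from Proposition \ref{prop4}) for its analogue \eqref{poissonbis} (Green function of $\Delta^{m}$ together with the $C^{2m-1}$ boundary bound on $u_k-\bar u_k$ from Proposition \ref{prop4bis}). The point is that these two ingredients are exactly what drove the pointwise estimate in Proposition \ref{propgrad2}, and both are now available.

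Concretely, fix $x\in\Omega_0$ with $x\neq x_{i,k}$ for $1\leq i\leq L$, choose $B_{4\delta}(\xi)\subset\Omega$ with $x\in B_{2\delta}(\xi)$ and $\mathrm{dist}(\partial B_{4\delta}(\xi),S)\geq 2\delta$ (as in the proof of Proposition \ref{gradbis}), and differentiate \eqref{poissonbis} $\ell$ times in $x$. Using $|\nabla^\ell G_x(y)|\leq C/|x-y|^\ell$ on the interior term together with \eqref{stimebis} applied on $\partial B_{4\delta}(\xi)$ for the boundary terms, we obtain the pointwise estimate
$$|\nabla^\ell u_k(x)|\leq C\int_{B_{4\delta}(\xi)}\frac{e^{2mu_k(y)}}{|x-y|^\ell}dy+C,$$
which is the exact counterpart of the pointwise bound \eqref{nablau} driving the proof of Proposition \ref{propgrad2}.

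From here the combinatorial/integral decomposition of Proposition \ref{propgrad2} goes through unchanged. We split $B_{2\delta}(\xi)=\bigcup_{i=1}^L \Omega_{i,k}$ by nearest $x_{i,k}$, then each $\Omega_{i,k}$ into $\Omega_{i,k}^{(1)}=\Omega_{i,k}\cap B_{|x-x_{i,k}|/2}(x_{i,k})$ and $\Omega_{i,k}^{(2)}=\Omega_{i,k}\setminus B_{|x-x_{i,k}|/2}(x_{i,k})$. On $\Omega_{i,k}^{(1)}$ we use $|x-y|\geq |x-x_{i,k}|/2$ combined with \eqref{area}; on $\Omega_{i,k}^{(2)}$ we use the pointwise decay $|y-x_{i,k}|e^{u_k(y)}\leq C$ from Proposition \ref{exh}(c), giving $e^{2mu_k(y)}\leq C|y-x_{i,k}|^{-2m}$, and split once more into $\Omega_{i,k}^{(3)}=\Omega_{i,k}^{(2)}\cap B_{2|x-x_{i,k}|}(x)$ and $\Omega_{i,k}^{(4)}=\Omega_{i,k}^{(2)}\setminus B_{2|x-x_{i,k}|}(x)$. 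Each of the four elementary integrals yields at most $C|x-x_{i,k}|^{-\ell}$ for every $\ell$ in the stated range (the integrability at $r=0$ in the $\Omega_{i,k}^{(3)}$ piece requires $\ell<2m$ and at $r=\infty$ in the $\Omega_{i,k}^{(4)}$ piece requires $\ell>0$, both comfortably satisfied for $1\leq\ell\leq 2m-2$). Taking the infimum over $i$ gives \eqref{nablaubis} for $x\in B_{2\delta}(\xi)\setminus S$; for $x$ with $\mathrm{dist}(x,S)\geq 1$ the bound is immediate from Proposition \ref{prop4bis}, and a covering argument concludes.

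There is no genuine obstacle beyond bookkeeping: the only real novelty compared to Proposition \ref{propgrad2} is that we must work with the representation \eqref{poissonbis} and invoke Proposition \ref{prop4bis} rather than Proposition \ref{prop4}; both steps are already carried out in the proof of Proposition \ref{gradbis}, so the inner-integral estimate is the same and the exponents $\ell$ differ only by an irrelevant shift of two.
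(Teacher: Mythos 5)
Your proposal is correct and is exactly the argument the paper intends: the paper's proof of Proposition \ref{propgrad2bis} is precisely ``combine the pointwise bound $|\nabla^\ell u_k(x)|\leq C\int_{B_{4\delta}(\xi)}|x-y|^{-\ell}e^{2mu_k(y)}dy+C$ already derived in the proof of Proposition \ref{gradbis} (via \eqref{poissonbis} and Proposition \ref{prop4bis}) with the nearest-point decomposition of Proposition \ref{propgrad2}.'' Your check of the integrability conditions ($\ell<2m$ and $\ell>0$) and the final covering step match the paper's argument.
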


Taking into account Proposition \ref{exh} and Proposition \ref{propgrad2bis}, one can follow the proof of step 4 of Theorem 2 in \cite{mar3}, in order to prove that the concentration points are isolated, i.e. $x^{(i)}\neq x^{(j)}$ for $i\neq j$, $I=L$, and that for $\delta>0$ small enough
$$\lim_{R\to \infty}\lim_{k\to\infty}\int_{B_\delta(x_{i,k})\setminus B_{R\mu_{i,k}}(x_{i,k})}V_k e^{2mu_k}dx=0.$$
This and \eqref{energia} complete the proof of Theorem \ref{trm2}.

\section{A few open questions}

\noindent\emph{1) Necessity of hypothesis \eqref{Deltau-} and \eqref{nablau3}.} Is the assumption \eqref{Deltau-} (resp. \eqref{nablau3}) necessary in order to have quantization in the second part of Theorem \ref{trm1} (resp. Theorem \ref{trm2}), or is \eqref{Deltau} (resp. \eqref{nablau2}) enough?

For instance, is it possible to find a sequence $(u_k)$ of solutions to
$$(-\Delta)^m u_k=e^{2mu_k}\quad \text{in }B_1(0) $$
with 
$$\lim_{k\to\infty}\int_{B_1(0)}e^{2mu_k}dx= \alpha\in(0,\Lambda_1) $$
and
$$\int_{B_\rho(\xi)}|\Delta u_k|dx\leq C$$
for a ball $B_\rho(\xi)\subset B_1(0)$? To our knowledge, this is unknown even in the case when $u_k$ is radially symmetric, see \cite{rob1}.

\medskip

\noindent\emph{2)} If case (i) of Theorem \ref{trm1} (or equivalently Theorem \ref{trm2}) occurs, is it possible to have $S\neq\emptyset$? If instead of \eqref{Vk} we only assume the bound $\|V_k\|_{L^\infty(\Omega)}\leq C$, the answer is negative, as shown for $m=1$ by Shixiao Wang \cite{wan}.

\medskip

\noindent\emph{3) Boundedness from above.} Given a solution $u$ to
$$(-\Delta)^m u= Ve^{2mu}\quad\text{in }\R{2m},$$
with $V\in L^\infty(\R{2m})$, $e^{2mu}\in L^1(\R{2m})$, is it true that $\sup_{\R{2m}}u<\infty$?

For $m=1$ this was proven by Br\'ezis and Merle, \cite[Theorem 2]{BM}, but their simple technique, which rests on the mean-value theorem for harmonic functions, cannot be applied when $m>1$. It is only known that when $V\equiv const\ge 0$ the answer is positive, see \cite[Theorem 1]{lin}, \cite[Theorem 1]{mar1} and \cite[Theorem 3]{mar2}.

\end{document}